 \newtheorem{remark}{Remark}
 \newtheorem{lemma}[remark]{Lemma}
 \newtheorem{theorem}[remark]{Theorem}
 \newtheorem{proposition}[remark]{Proposition}
 \newtheorem{corollary}[remark]{Corollary}
\title{Strong metric dimension of rooted product graphs}
\date{}
\author{  Dorota Kuziak$^{(1)}$, Ismael G. Yero$^{(2)}$ and Juan A.
Rodr\'{\i}guez-Vel\'{a}zquez$^{(1)}$
    \\
$^{(1)}${\small Departament d'Enginyeria Inform\`atica i Matem\`atiques,}\\
{\small Universitat Rovira i Virgili,}  {\small Av. Pa\"{\i}sos
Catalans 26, 43007 Tarragona, Spain.} \\{\small
juanalberto.rodriguez\@@urv.cat, dorota.kuziak\@@urv.cat}
\\
$^{(2)}${\small Departamento de Matem\'aticas, Escuela Polit\'ecnica Superior de Algeciras}\\
{\small Universidad de C\'adiz,} {\small
Av. Ram\'on Puyol s/n, 11202 Algeciras, Spain.} \\ {\small
ismael.gonzalez\@@uca.es}\\
}
\begin{document}

\maketitle

\begin{abstract}
Let $G$ be a connected graph. A vertex $w$  strongly resolves a pair $u$, $v$ of vertices of $G$ if there exists some shortest $u-w$ path containing $v$ or some shortest $v-w$ path containing $u$. A set $W$ of vertices is a strong resolving set for $G$ if every pair of
vertices of $G$ is strongly resolved by some vertex of $W$. The smallest cardinality of a strong resolving set for $G$ is called the strong metric
dimension of $G$. It is known that the problem of computing this invariant is NP-hard. This suggests finding the strong metric dimension for special classes of graphs or obtaining good bounds on this invariant.  In this paper we study the problem of finding exact values or sharp bounds for the strong metric dimension of rooted product of graphs and express these in terms of invariants of the factor graphs.
\end{abstract}

{\it Keywords:} Strong metric dimension; rooted product graphs; strong metric basis; strong resolving set.

{\it AMS Subject Classification Numbers:}  05C12; 05C69; 05C76.

\section{Introduction}

A {\em generator} of a metric space is a set $S$ of points in the space with the property that every point of the space is uniquely determined by its distances from the elements of $S$. Given a simple and connected graph $G=(V,E)$, we consider the metric $d_G:V\times V\rightarrow \mathbb{R}^+$, where $d_G(x,y)$ is the length of a shortest path between $x$ and $y$. $(V,d_G)$ is clearly a metric space. A vertex $v\in V$ is said to distinguish two vertices $x$ and $y$ if $d_G(v,x)\ne d_G(v,y)$.
A set $S\subset V$ is said to be a \emph{metric generator} for $G$ if any pair of vertices of $G$ is
distinguished by some element of $S$. A minimum generator is called a \emph{metric basis}, and
its cardinality the \emph{metric dimension} of $G$, denoted by $dim(G)$. Motivated by the problem of uniquely determining the location of an intruder in a network, the concept of metric
dimension of a graph was introduced by Slater in \cite{leaves-trees,slater2}, where the metric generators were called \emph{locating sets}. The concept of metric dimension of a graph was introduced independently by Harary and Melter in \cite{harary}, where metric generators were called \emph{resolving sets}. Applications of this invariant to the navigation of robots in networks are discussed in \cite{landmarks} and applications to chemistry in \cite{pharmacy1,pharmacy2}.  This invariant was studied further in a number of other papers including for example, \cite{pelayo1, chappell,chartrand,chartrand2, fehr,haynes,Tomescu1,LocalMetric,survey,tomescu,yerocartpartres,CMWA,metric-rooted}.  Several variations of metric generators including resolving dominating sets \cite{brigham}, independent resolving sets \cite{chartrand1}, local metric sets \cite{LocalMetric}, and strong resolving sets \cite{strongDimensionCorona,Oellermann,seb}, etc. have been introduced and studied.

In this article we are interested in the study  of strong resolving sets \cite{Oellermann,seb}.
A vertex $w\in V(G)$ \emph{strongly resolves} two vertices $u,v\in V(G)$ if 
$d_G(w,u)=d_G(w,v)+d_G(v,u)$ or $d_G(w,v)=d_G(w,u)+d_G(u,v)$, i.e., there exists some shortest $w-u$ path containing $v$ or some shortest $w-v$ path containing $u$. A set $S$ of vertices in a connected graph $G$ is a \emph{strong metric generator} for $G$ if every two vertices of $G$ are strongly resolved by some vertex
of $S$. The smallest cardinality of a strong resolving set of $G$ is called \emph{strong metric dimension} and is denoted by $dim_s(G)$.  So, for example, $dim_s(G)=n-1$ if and only if $G$ is the complete graph  of order $n$.
For the cycle $C_n$ of order $n$ the strong metric dimension is $dim_s(C_n) = \lceil n/2\rceil$  and if $T$ is a tree with $l(T)$ leaves, its strong metric dimension equals  $l(T)-1$   (see \cite{seb}).  A \emph{strong metric basis} of $G$ is a strong metric generator for $G$ of cardinality $dim_s(G)$.

Given a simple graph $G=(V,E)$, we denote two adjacent vertices $u,v$ by $u\sim v$. The \emph{neighborhood of a vertex} $v$ of $G$ is $N_G(v)=\{u\in V(G): u\sim v\}$ and the degree of $v$ is $\delta_G(v)=|N_G(v)|$. The \emph{open neighborhood of a set} $S$ of vertices of $G$ is $N_G(S)=\bigcup_{v\in S}N_G(v)$ and the \emph{closed neighborhood of} $S$ is $N_G[S]=N_G(S)\cup S$. The \emph{subgraph induced by a set} $X$ will be denoted by $\langle X \rangle$.
A vertex $u$ of $G$ is \emph{maximally distant} from $v$ if for every vertex $w$ in the open neighborhood of $u$, $d_G(v,w)\le d_G(u,v)$. If $u$ is maximally distant from $v$ and $v$ is maximally distant from $u$, then we say that $u$ and $v$ are \emph{mutually maximally distant}.  The {\em boundary} of $G=(V,E)$ is defined as $\partial(G) = \{u\in V:$ there exists $v\in V$  such that $u,v$  are mutually maximally distant$\}$. For some basic graph classes, such as complete graphs $K_n$, complete bipartite graphs $K_{r,s}$,  cycles $C_n$ and hypercube graphs $Q_k$, the boundary is simply the whole vertex set.
It is not difficult to see that this property holds for all  $2$-antipodal\footnote{The diameter of $G=(V,E)$ is defined as $D(G)=\max_{u,v\in V}\{d(u,v)\}$.  We recall that $G=(V,E)$ is $2$-antipodal if for each vertex $x\in V$ there exists exactly one vertex $y\in V$ such that $d_G(x,y)=D(G)$.} graphs and also for all distance-regular graphs.
Notice that the boundary of a tree consists exactly of the set of its leaves. A vertex  of a graph  is a {\em simplicial vertex} if the subgraph induced by  its neighbors is a complete graph. Given a graph $G$, we denote by $\sigma(G)$ the set of simplicial vertices of $G$.
Notice that $\sigma(G)\subseteq \partial(G)$.

We use the notion of strong resolving graph introduced in \cite{Oellermann}. The \emph{strong resolving graph}\footnote{In fact, according to \cite{Oellermann} the strong resolving graph $G'_{SR}$ of a graph $G$ has vertex set $V(G'_{SR})=V(G)$ and two vertices $u,v$ are adjacent in $G'_{SR}$ if and only if $u$ and $v$ are mutually maximally distant in $G$. So, the strong resolving graph defined here is a subgraph of the strong resolving graph defined in \cite{Oellermann} and it can be obtained from the latter graph by deleting its isolated vertices.} of $G$ is a graph $G_{SR}$  with vertex set $V(G_{SR}) = \partial(G)$ where two vertices $u,v$ are adjacent in $G_{SR}$ if and only if $u$ and $v$ are mutually maximally distant in $G$.

There are some families of graphs for which its strong resolving graph can be obtained relatively easy. For instance, we emphasize the following cases.
\begin{itemize}
\item If $\partial(G)=\sigma(G)$, then $G_{SR}\cong K_{|\partial(G)|}$. In particular, $(K_n)_{SR}\cong K_n$ and for any tree $T$ with $l(T)$ leaves, $(T)_{SR}\cong K_{l(T)}$.

\item For any $2$-antipodal graph $G$ of order $n$, $G_{SR}\cong \bigcup_{i=1}^{\frac{n}{2}} K_2$. In particular,  $(C_{2k})_{SR}\cong \bigcup_{i=1}^{k} K_2$.

\item $(C_{2k+1})_{SR}\cong C_{2k+1}$.
\end{itemize}

A set $S$ of vertices of $G$ is a \emph{vertex cover} of $G$ if every edge of $G$ is incident with at least one vertex of $S$. The \emph{vertex cover number} of $G$, denoted by $\alpha(G)$, is the smallest cardinality of a vertex cover of $G$. We refer to an $\alpha(G)$-set in a graph $G$ as a vertex cover  of cardinality $\alpha(G)$. Oellermann and Peters-Fransen \cite{Oellermann} showed that the problem of finding the strong metric dimension of a connected graph $G$ can be transformed to the problem of finding the vertex cover number of $G_{SR}$. The following result  will be an important tool of this article.

\begin{theorem}{\em \cite{Oellermann}}\label{lem_oellerman}
Let $G$ be connected graph. A set $W\subset V(G)$ is a strong metric generator for $G$ if and only if $W$ is a vertex cover for $G_{SR}$.
\end{theorem}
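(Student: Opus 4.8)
The plan is to prove the two implications separately, the heart of the argument being two structural facts about mutually maximally distant (for short, MMD) vertices.

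For the implication that a strong metric generator $W$ is a vertex cover of $G_{SR}$, I would first observe that if $u$ and $v$ are MMD in $G$ then the only vertices of $G$ strongly resolving the pair $\{u,v\}$ are $u$ and $v$ themselves. Indeed, if $w$ strongly resolves $u,v$ with, say, $d_G(w,u)=d_G(w,v)+d_G(v,u)$, then some shortest $w-u$ path $P$ passes through $v$; if $w\neq v$, the neighbour $y$ of $v$ lying on $P$ on the $w$-side of $v$ satisfies $d_G(w,y)=d_G(w,v)-1$ and hence $d_G(y,u)=d_G(w,u)-d_G(w,y)=d_G(v,u)+1$, contradicting that $v$ is maximally distant from $u$. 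Hence $w\in\{u,v\}$ (the other case of the definition is analogous). Consequently, whenever $uv\in E(G_{SR})$ the vertices $u,v$ are MMD, so the pair $\{u,v\}$ can be strongly resolved only by $u$ or by $v$; therefore a strong metric generator must contain one of them, i.e.\ it is a vertex cover of $G_{SR}$.

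For the converse, let $W$ be a vertex cover of $G_{SR}$ and let $u,v$ be distinct vertices of $G$; I would manufacture an MMD pair $u^{*},v^{*}$ such that $u$ and $v$ both lie on a common shortest $u^{*}-v^{*}$ path, in the order $u^{*},u,v,v^{*}$. First build $u^{*}$ by repeatedly passing from the current vertex, starting at $u$, to a neighbour strictly farther from $v$ while one exists; since each step increases the distance to $v$ by exactly one and distances are bounded, this stops at a vertex $u^{*}$ maximally distant from $v$, and concatenating a shortest $v-u$ path with the walk produced yields a shortest $v-u^{*}$ path, whence $d_G(v,u^{*})=d_G(v,u)+d_G(u,u^{*})$. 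Analogously build $v^{*}$ starting from $v$ and moving away from $u^{*}$, obtaining $v^{*}$ maximally distant from $u^{*}$ with $d_G(u^{*},v^{*})=d_G(u^{*},v)+d_G(v,v^{*})$. The point requiring care is that $u^{*}$ is also maximally distant from $v^{*}$: for every neighbour $z$ of $u^{*}$ one has $d_G(z,v^{*})\le d_G(z,v)+d_G(v,v^{*})\le d_G(u^{*},v)+d_G(v,v^{*})=d_G(u^{*},v^{*})$, using that $u^{*}$ is maximally distant from $v$. Thus $u^{*},v^{*}$ are MMD; they are distinct because $d_G(u^{*},v^{*})\ge d_G(u,v)>0$, so $u^{*}v^{*}\in E(G_{SR})$ and hence $u^{*}\in W$ or $v^{*}\in W$. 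Combining the two distance identities obtained shows that a concatenation of shortest $u^{*}-u$, $u-v$ and $v-v^{*}$ paths is a shortest $u^{*}-v^{*}$ path, so passing to subpaths gives $d_G(u^{*},v)=d_G(u^{*},u)+d_G(u,v)$ and $d_G(v^{*},u)=d_G(v^{*},v)+d_G(v,u)$; therefore whichever of $u^{*},v^{*}$ lies in $W$ strongly resolves $\{u,v\}$. Since the pair was arbitrary, $W$ is a strong metric generator.

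The main obstacle is exactly this extension construction: a single one-sided extension away from $v$ need not leave its endpoint maximally distant from the final $v^{*}$, which is why the second extension must be taken away from $u^{*}$ rather than away from $u$, and why the triangle-inequality check for mutual maximal distance is needed. The remaining work is routine bookkeeping with the distance identities coming from subpaths of shortest paths.
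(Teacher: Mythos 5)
Your proof is correct. The paper states this result as Theorem~\ref{lem_oellerman} with a citation to \cite{Oellermann} and supplies no proof of its own, so there is nothing internal to compare against; your two-sided argument --- that a mutually maximally distant pair can be strongly resolved only by its own two endpoints, and that an arbitrary pair $u,v$ extends along a geodesic to a mutually maximally distant pair $u^{*},v^{*}$ lying with $u,v$ in the order $u^{*},u,v,v^{*}$ on a common shortest path --- is essentially the standard proof from that reference, including the genuinely necessary check that $u^{*}$ stays maximally distant from $v^{*}$ after the second extension.
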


It was shown in \cite{Oellermann}  that the problem of computing $dim_s(G)$ is NP-hard. This suggests finding the strong metric dimension for special classes of graphs or obtaining good bounds on this invariant. An efficient procedure for finding the strong metric dimension of distance hereditary graphs was described in \cite{may-oellermann}. In this paper we study the problem of finding exact values or sharp bounds for the strong metric dimension of rooted product of graphs and express these in terms of invariants of the factor graphs. Notice that the metric dimension of rooted product graphs has been recently studied in \cite{metric-rooted}.


\begin{figure}[h]
  \centering
  \includegraphics[width=0.3\textwidth]{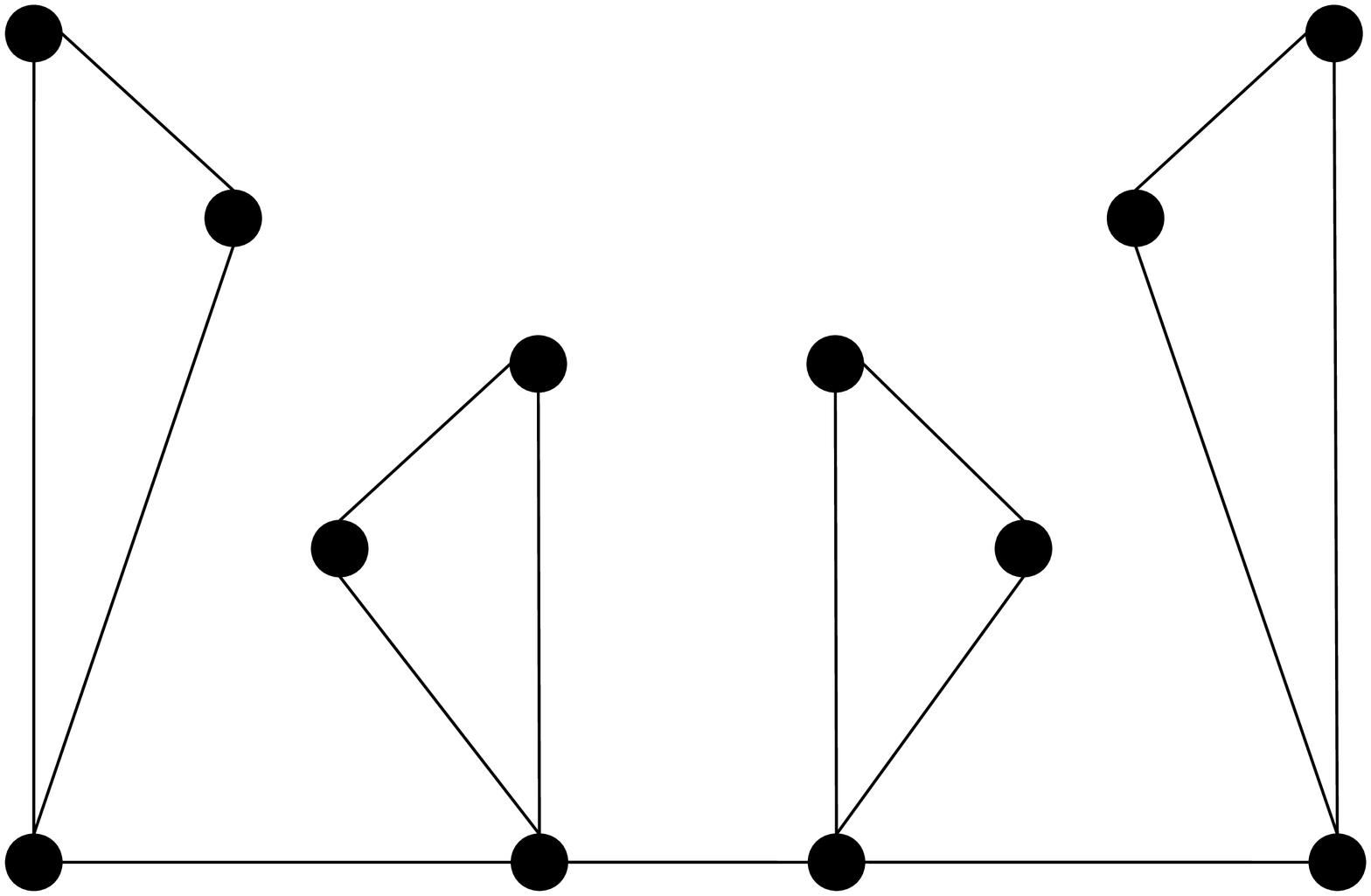}
  \hspace*{2.0cm} \includegraphics[width=0.3\textwidth]{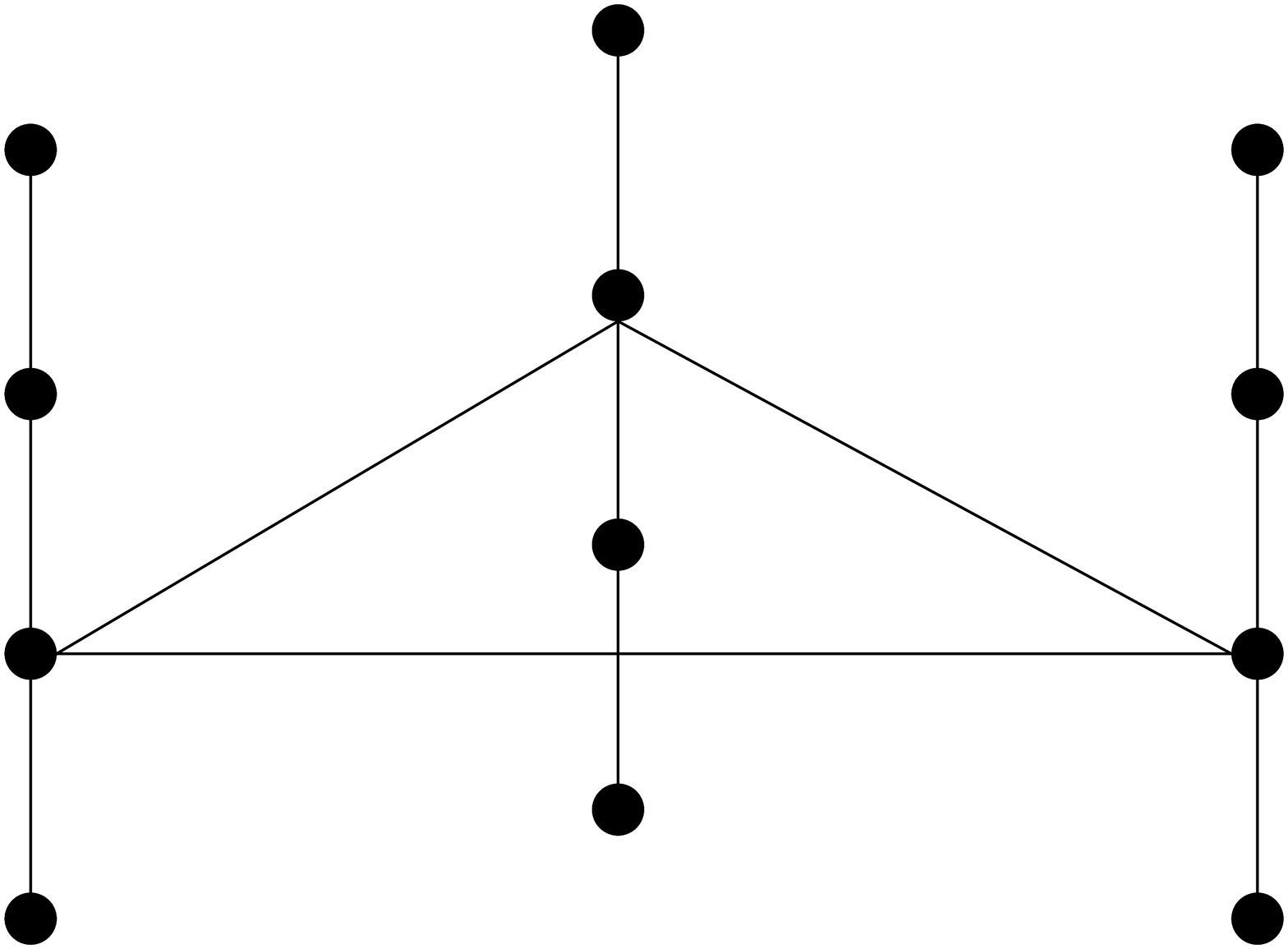}
  \caption{The rooted product graphs $P_4\circ C_3$ and $C_3\circ_v P_4$, where $v$ has degree two.}\label{p-4-c-3}
\end{figure}

A \emph{rooted graph} is a graph in which one vertex is labeled in a special way so as to distinguish it from other vertices. The special vertex is called the \emph{root} of the graph. Let $G$ be a labeled graph on $n$ vertices. Let ${\cal H}$ be a sequence of $n$ rooted graphs $H_1$, $H_2$,...,$H_n$. The \emph{rooted product graph} $G( {\cal H})$ is the graph obtained by identifying the root of $H_i$ with the $i^{th}$ vertex of $G$ \cite{rooted-first}. In this paper we consider the particular case of rooted product graph where ${\cal H}$ consists of $n$  isomorphic rooted graphs \cite{Schwenk}. More formally, assuming that $V(G) = \{u_1, ..., u_n\}$ and that the root vertex of $H$ is $v$, we define the rooted product graph $G\circ_{v} H=(V,E)$, where $V=V(G)\times V(H)$ and
$$E=\displaystyle\bigcup_{i=1 }^n\{(u_i,b)(u_i,y): \; by\in E(H)\}\cup \{(u_i,v)(u_j,v):\; u_iu_j\in E(G)\}.$$ Note that for any $x\in V(G)$ the subgraph $H_x=\langle\{x\}\times V(H)\rangle$ of $G\circ_v H$ is isomorphic to $H$. Given $x\in V(G)$, $v\in V(H)$ and  $B\subset V(G)\times V(H)$ we will denote by $B_x$ the set of element of $B$ whose first component is $x$, i.e., $B_x=B\cap (\{x\}\times V(H))$.

If $H$ is a vertex transitive graph, then $G\circ_v H$ does not depend on the choice of $v$, up to isomorphism. In such a case  we will denote the rooted product by $G\circ H$. Figure \ref{p-4-c-3} shows the case of the rooted product graphs $P_4\circ C_3$ and $C_3\circ_v P_4$, where $v$ has degree two. We also recall that the \emph{corona product} $G\odot H$ is defined as the graph obtained from $G$ and $H$ by taking one copy of $G$ and $n$ copies of $H$ and joining by an edge each vertex from the $i^{th}$-copy of $H$ with the $i^{th}$-vertex of $G$. If $G$ and $H$ are connected graphs of order $n\ge 2$, $H$ is a connected graph of order $t\ge 2$, then we note that the corona product graph $G\odot H$ is a particular case of a rooted product graph, {\em i.e.}, $G\odot H \cong G\circ_v (K_1+H) $, where $v$ denotes the vertex of $K_1$. Metric dimension and strong metric dimension of corona product graphs were studied in \cite{CMWA} and \cite{strongDimensionCorona}, respectively.

We emphasize that given $a,b\in V(G)$ and $x,y,v\in V(H)$ it follows, $d_{G\circ_{v} H}((a,x),(a,y))=d_H(x,y)$ and if $a\ne b$, then $d_{G\circ_{v} H}((a,x),(b,y))=d_H(x,v)+d_G(a,b)+d_H(v,y)$.

This article is composed by two main sections. In Section \ref{Formulae}  we obtain closed formulae for the strong metric dimension of some classes of rooted product graphs while  Section \ref{bounds} is devoted to obtain tight bounds for the strong metric dimension of rooted product graphs.

\section{Closed formulae}\label{Formulae}

We start by stating the following easily verified lemmas.

\begin{lemma}\label{maximally_distant with root}
Let $G$ and $H$ be two connected  graphs. Let $a,b\in V(G)$, $a\ne b$, $x,y,v\in V(H)$ and let $M(v)$ be the set of vertices of $ H$ which are maximally distant from $v$. Then $(a,x)$ and $(b,y)$ are mutually maximally distant vertices in $G\circ_v H$ if and only if $x,y\in M(v)$.
\end{lemma}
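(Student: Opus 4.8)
The plan is to characterize mutual maximal distance in $G\circ_v H$ by reducing everything to distances in $H$, using the explicit distance formula quoted just before the lemma, namely that for $a\neq b$ one has $d_{G\circ_v H}((a,x),(b,y))=d_H(x,v)+d_G(a,b)+d_H(v,y)$, and that within a single copy $d_{G\circ_v H}((a,x),(a,y))=d_H(x,y)$. The key observation is that the neighborhood of a vertex $(a,x)$ in $G\circ_v H$ is $N_H(x)\times\{a\}$ together with, when $x=v$, the extra vertices $(u_j,v)$ for $u_j\sim_G a$. So the condition that $(a,x)$ is maximally distant from $(b,y)$ splits into looking at neighbors inside $H_a$ and possibly neighbors in other copies.

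First I would fix $a\neq b$ and analyze when $(a,x)$ is maximally distant from $(b,y)$. Take any neighbor of $(a,x)$. If it is of the form $(a,x')$ with $x'\in N_H(x)$, then by the distance formula $d_{G\circ_v H}((a,x'),(b,y))=d_H(x',v)+d_G(a,b)+d_H(v,y)$, and comparing with $d_{G\circ_v H}((a,x),(b,y))=d_H(x,v)+d_G(a,b)+d_H(v,y)$, the inequality $d_{G\circ_v H}((a,x'),(b,y))\le d_{G\circ_v H}((a,x),(b,y))$ is equivalent to $d_H(x',v)\le d_H(x,v)$. Requiring this for all $x'\in N_H(x)$ is exactly the statement that $x$ is maximally distant from $v$ in $H$, i.e. $x\in M(v)$. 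The only other possible neighbors of $(a,x)$ occur when $x=v$, and these are vertices $(u_j,v)$ in other copies; but then $d_{G\circ_v H}((u_j,v),(b,y))=d_G(u_j,b)+d_H(v,y)$ (if $u_j\neq b$) or $d_H(v,y)$ (if $u_j=b$), and since $u_j\sim_G a$ we have $d_G(u_j,b)\le d_G(a,b)+1$; one checks this never exceeds $d_G(a,b)+d_H(v,y)=d_{G\circ_v H}((a,v),(b,y))$ provided $d_H(v,y)\ge 1$, which holds because $b\neq a$ forces the path from $(a,v)$ to $(b,y)$ to have positive $H$-length unless $y=v$, and the case $y=v$ is handled directly (if $y=v$ then $x=v=y\in M(v)$ trivially by the convention, and $(a,v),(b,v)$ are indeed mutually maximally distant when $a,b$ are suitably placed — this boundary subcase needs a quick separate check). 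By symmetry in $(a,x)\leftrightarrow(b,y)$, mutual maximal distance is then equivalent to $x\in M(v)$ and $y\in M(v)$.

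The step I expect to be the main obstacle — though it is still routine — is the careful bookkeeping for the case $x=v$ (or $y=v$), where the extra cross-copy edges of the rooted product enter and one must verify that these neighbors never witness a violation of maximal distance; the subtlety is that $d_H(v,y)=0$ exactly when $y=v$, so one must confirm separately that $(a,v)$ and $(b,v)$ behave correctly and that $v\in M(v)$ under the natural convention that a vertex is maximally distant from itself. I would also note that the ``if'' direction is the easy one: if $x,y\in M(v)$, the computations above run in reverse to show no neighbor of either vertex increases the distance to the other, so they are mutually maximally distant. Finally I would remark that the lemma makes no claim about pairs $(a,x),(a,y)$ in the same copy; those are governed by mutual maximal distance within $H_a\cong H$ directly, and are treated elsewhere, so the statement as written — concerning only the case $a\neq b$ — is exactly what the distance formula yields.
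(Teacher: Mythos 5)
Your core computation is correct and is essentially the paper's proof: for a neighbor $(a,x')$ of $(a,x)$ lying in the copy $H_a$, the distance formula $d_{G\circ_v H}((a,x'),(b,y))=d_H(x',v)+d_G(a,b)+d_H(v,y)$ reduces the comparison with $d_{G\circ_v H}((a,x),(b,y))$ to comparing $d_H(x',v)$ with $d_H(x,v)$, so that quantifying over $x'\in N_H(x)$ recovers exactly the condition $x\in M(v)$. The paper runs this same computation in both directions by contradiction and never mentions the cross-copy edges at all.

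Your handling of the boundary case $x=v$ (or $y=v$), however, contains genuine errors. There is no ``natural convention'' under which $v\in M(v)$: by the definition in the paper, $v$ is maximally distant from itself only if every $w\in N_H(v)$ satisfies $d_H(v,w)\le d_H(v,v)=0$, which fails for every nontrivial connected $H$. Correspondingly, $(a,v)$ is \emph{never} maximally distant from $(b,y)$, since the within-copy neighbor $(a,w)$ with $w\in N_H(v)$ gives $d_{G\circ_v H}((a,w),(b,y))=1+d_G(a,b)+d_H(v,y)>d_{G\circ_v H}((a,v),(b,y))$; thus both sides of the equivalence are false in this case and the lemma holds, whereas your assertion that $(a,v)$ and $(b,v)$ ``are indeed mutually maximally distant when $a,b$ are suitably placed'' would contradict the statement being proved. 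Your inequality for the cross-copy neighbors is also false as stated: if $u_j\sim_G a$ and $d_G(u_j,b)=d_G(a,b)+1$, then $d_{G\circ_v H}((u_j,v),(b,y))=d_G(a,b)+1+d_H(v,y)$ strictly exceeds $d_{G\circ_v H}((a,v),(b,y))$. None of this extra analysis is needed: in the ``if'' direction $x,y\in M(v)$ forces $x,y\ne v$, so every relevant neighbor lies inside a copy of $H$, and in the ``only if'' direction one only has to exhibit a single distance-increasing neighbor, which the within-copy computation already supplies even when $x=v$. Replacing the cross-copy discussion by the observation that $v\notin M(v)$ would make your argument correct and identical in substance to the paper's.
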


\begin{proof}
(Sufficiency) Suppose that $(a,x)$ and $(b,y)$ are not mutually maximally distant vertices in $G\circ_{v}H$. So, there exists a vertex $(a,x')\in N_{G\circ_{v} H}(a,x)$ such that $d_{G\circ_{v} H}((a,x'),(b,y))>d_{G\circ_{v} H}((a,x),(b,y))$, or there exists $(b,y')\in N_{G\circ_{v} H}(b,y)$ such that $d_{G\circ_{v} H}((a,x),(b,y'))>d_{G\circ_{v} H}((a,x),(b,y))$. We consider, without loss of generality, that $(a,x')\in N_{G\circ_{v} H}(a,x)$ and $d_{G\circ_{v} H}((a,x'),(b,y))>d_{G\circ_{v} H}((a,x),(b,y))$. So we have,
\begin{align*}
d_H(x',v)&=d_{G\circ_{v} H}((a,x'),(b,y))-d_G(a,b)-d_H(v,y)\\
&>d_{G\circ_{v} H}((a,x),(b,y))-d_G(a,b)-d_H(v,y)\\
&=d_H(x,v).
\end{align*}
Thus, $d_H(x',v)>d_H(x,v)$. Since $x'\in N_H(x)$ and $x\in M(v)$, we have a contradiction.

(Necessity) Let us suppose that $x\notin M(v)$. So, there exists $x''\in N_H(x)$ such that $d_H(x'',v)>d_H(x,v)$. Thus, $d_{G\circ_{v} H}((a,x),(b,y))=d_H(x,v)+d_G(a,b)+d_H(v,y)<d_H(x'',v)+d_G(a,b)+d_H(v,y)=d_{G\circ_{v} H}((a,x''),(b,y))$. Hence, there exists a vertex $(a,x'')\in N_{G\circ_{v} H}((a,x))$ such that $d_{G\circ_{v} H}((a,x),(b,y))<d_{G\circ_{v} H}((a,x''),(b,y))$, which is a contradiction since $(a,x)$ and $(b,y)$ are mutually maximally distant.\end{proof}

\begin{lemma}\label{rooted mutually_maximally_distant}
Let $G$ and $H$ be two connected nontrivial graphs. Let $v,x,y$ be vertices of $H$ such that $x,y\ne v$. For every vertex $a$ of $G$ we have that $(a,x)$ and $(a,y)$ are mutually maximally distant vertices in $G\circ_v H$ if and only if the vertices $x$ and $y$ are mutually maximally distant in $H$.
\end{lemma}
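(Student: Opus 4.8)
The plan is to exploit two structural facts about $G\circ_v H$ that hold precisely because $x,y\neq v$. First, the only edges of $G\circ_v H$ joining distinct copies $H_{u_i}$ and $H_{u_j}$ are of the form $(u_i,v)(u_j,v)$, so a vertex $(a,x)$ with $x\neq v$ satisfies $N_{G\circ_v H}((a,x))=\{a\}\times N_H(x)$; in particular all of its neighbours lie in $H_a$. Second, by the distance formula emphasized before the statement, $d_{G\circ_v H}((a,p),(a,q))=d_H(p,q)$ for all $p,q\in V(H)$, so $H_a$ is an isometric subgraph of $G\circ_v H$.

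Given these two facts, I would unfold the definition of maximal distance. The vertex $(a,x)$ is maximally distant from $(a,y)$ in $G\circ_v H$ if and only if for every $(a,x')\in N_{G\circ_v H}((a,x))$ we have $d_{G\circ_v H}((a,x'),(a,y))\le d_{G\circ_v H}((a,x),(a,y))$. By the first fact the vertices $(a,x')$ range exactly over $\{a\}\times N_H(x)$, and by the second fact the displayed inequality is equivalent to $d_H(x',y)\le d_H(x,y)$. Hence $(a,x)$ is maximally distant from $(a,y)$ in $G\circ_v H$ if and only if $x$ is maximally distant from $y$ in $H$. Interchanging the roles of $x$ and $y$ (using $y\neq v$ this time) gives that $(a,y)$ is maximally distant from $(a,x)$ in $G\circ_v H$ if and only if $y$ is maximally distant from $x$ in $H$. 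Conjoining the two equivalences yields that $(a,x)$ and $(a,y)$ are mutually maximally distant in $G\circ_v H$ if and only if $x$ and $y$ are mutually maximally distant in $H$.

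There is essentially no hard step; the only point that needs care is the hypothesis $x,y\neq v$, which is exactly what guarantees that $(a,x)$ and $(a,y)$ have no neighbours outside $H_a$. Indeed, if $x=v$ then $(a,v)$ would also be adjacent to $(b,v)$ for every $b\sim a$ in $G$, and the condition for maximal distance would then involve distances reaching into other copies of $H$, destroying the reduction to $H$. With the hypothesis in force, the argument is a direct translation between the two graphs via the neighbourhood description and the distance formula.
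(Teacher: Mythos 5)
Your proposal is correct and follows essentially the same route as the paper: the paper's proof likewise rests on the two facts that for $z\ne v$ the neighbourhood of $(c,z)$ in $G\circ_v H$ is exactly $\{c\}\times N_H(z)$ and that $d_{G\circ_v H}((a,x),(a,y))=d_H(x,y)$, and then unfolds the definition of mutual maximal distance. Your version merely spells out the unfolding in more detail than the paper does.
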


\begin{proof}
The result follows directly from the fact that for every vertex $c$ of $G$ and every vertex $z\ne v$ of $H$ we have that $w\in N_H(z)$ if and only if $(c,w)\in N_{G\circ_{v} H}(c,z)$ and also that $d _{G\circ_v H}((a,x),(a,y))=d_H(x,y)$ for every $x,y$ of $H$.
\end{proof}

\begin{lemma}\label{maximally distant with v}
Let $H$ be a connected graph, let $v\in V(H)$ and let $M(v)$ be the set of vertices of $ H$ which are maximally distant from $v$. Then $M(v)\subseteq \partial(H)$.
\end{lemma}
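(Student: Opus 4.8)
The plan is to show that if $u \in M(v)$ then $u \in \partial(H)$, i.e., that $u$ is mutually maximally distant with some vertex of $H$. The natural candidate for that partner is $v$ itself, so the strategy splits into two cases according to whether or not $v$ is maximally distant from $u$.

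First I would take an arbitrary $u \in M(v)$. If $v$ is maximally distant from $u$, then $u$ and $v$ are mutually maximally distant by definition, hence $u \in \partial(H)$ and we are done. So assume $v$ is \emph{not} maximally distant from $u$. Then there is a neighbor $v_1 \in N_H(v)$ with $d_H(u,v_1) > d_H(u,v)$. The idea is to iterate: if $v_1$ is maximally distant from $u$, stop; otherwise there is $v_2 \in N_H(v_1)$ with $d_H(u,v_2) > d_H(u,v_1)$, and so on. Since $H$ is finite and the distances $d_H(u,v) < d_H(u,v_1) < d_H(u,v_2) < \cdots$ form a strictly increasing sequence bounded by the diameter $D(H)$, this process must terminate, producing a vertex $w$ that is maximally distant from $u$.

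The remaining step is to check that $u$ is also maximally distant from $w$, so that $u$ and $w$ are mutually maximally distant and therefore $u \in \partial(H)$. Here I would use that $u \in M(v)$ together with the chain of strict inequalities: along the path $v = v_0 \sim v_1 \sim \cdots \sim v_k = w$ built above, each step strictly increases the distance to $u$, which forces $d_H(u,w) = d_H(u,v) + d_H(v,w)$, i.e., this path is a shortest $u$--$w$ path passing through $v$. Now suppose, for contradiction, that $u$ is not maximally distant from $w$: there would be $u' \in N_H(u)$ with $d_H(u',w) > d_H(u,w) = d_H(u,v) + d_H(v,w)$. Combined with the triangle inequality $d_H(u',w) \le d_H(u',v) + d_H(v,w)$, this yields $d_H(u',v) > d_H(u,v)$, contradicting $u \in M(v)$. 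Hence $u$ and $w$ are mutually maximally distant and $u \in \partial(H)$.

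The main obstacle is the last inequality manipulation: one must be careful that the ``shortest path through $v$'' claim is genuinely justified by the strict-increase property (a one-step argument $d_H(u,v_{i+1}) \le d_H(u,v_i)+1$ together with $d_H(u,v_{i+1}) > d_H(u,v_i)$ gives $d_H(u,v_{i+1}) = d_H(u,v_i)+1$, and summing gives the additive identity). Everything else is bookkeeping with finiteness of $H$ and the triangle inequality, so this is indeed one of the ``easily verified lemmas'' the authors allude to.
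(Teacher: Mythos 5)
Your proof is correct and follows essentially the same route as the paper: starting from $u\in M(v)$, walk away from $v$ through neighbors that strictly increase the distance to $u$ until, by finiteness, you reach a vertex $w$ maximally distant from $u$, and then check that $u$ is also maximally distant from $w$. The only difference is that the paper simply asserts the latter fact at each step of the iteration, whereas you justify it explicitly at the end via the identity $d_H(u,w)=d_H(u,v)+d_H(v,w)$ and the triangle inequality, which is a welcome filling-in of a detail the paper leaves to the reader.
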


\begin{proof}
Let $u\in M(v)$. If $v$ is not maximally distant from $u$, then there exists a vertex $y_1\in N(v)$ such that $d(y_1,u)>d(v,u)$. So $u$ is maximally distant from $y_1$. By repeating this argument, since $H$ is finite, we will find a vertex $y_i$ such that $y_i$ and $u$ are mutually maximally distant. Therefore, $u\in \partial(H)$.
\end{proof}

\begin{proposition}\label{boundary-of-rooted}
Let $G$ be a connected graph of order $n\ge 2$ and let $H$  be a connected graph.
\begin{enumerate}[{\rm (i)}]
\item If   $v\in \partial(H)$, then  $\partial \left(G\circ_{v} H\right)=V(G)\times (\partial(H)-\{v\}).$

\item If   $v\not\in \partial(H)$, then $\partial \left(G\circ_{v} H\right)=V(G)\times \partial(H).$
\end{enumerate}
\end{proposition}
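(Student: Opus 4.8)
The plan is to determine $\partial(G\circ_v H)$ vertex by vertex, exploiting that every vertex of $G\circ_v H$ has the form $(a,x)$ with $a\in V(G)$, $x\in V(H)$, and that the ``root copy'' vertex $(a,v)$ is exactly the vertex $u_a$ of $G$. The first step is to show that no root copy lies in the boundary, i.e. $(a,v)\notin\partial(G\circ_v H)$ for every $a$. Since $G$ is connected and $n\ge 2$, the vertex $a$ has a neighbour $b$ in $G$, so $(b,v)\in N_{G\circ_v H}((a,v))$; and since $H$ is connected and nontrivial, $v$ has a neighbour $w$ in $H$, so $(a,w)\in N_{G\circ_v H}((a,v))$. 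Using the distance identities $d_{G\circ_v H}((a,x),(a,y))=d_H(x,y)$ and $d_{G\circ_v H}((a,x),(b,y))=d_H(x,v)+d_G(a,b)+d_H(v,y)$ for $a\ne b$ recalled in the Introduction, a one-line computation shows that for any target $(c,z)$ one of $(a,w)$, $(b,v)$ is strictly farther from $(c,z)$ than $(a,v)$ is (use $(a,w)$ when $c\ne a$, and $(b,v)$ when $c=a$). Hence $(a,v)$ is maximally distant from no vertex, and therefore $(a,v)\notin\partial(G\circ_v H)$.

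Next I would characterise the boundary among the remaining ``interior'' vertices. Fix $a\in V(G)$ and $x\in V(H)$ with $x\ne v$. Then $(a,x)\in\partial(G\circ_v H)$ if and only if $(a,x)$ is mutually maximally distant from some vertex $(b,y)$, and by the previous step that vertex must also be interior, so $y\ne v$. If $b=a$, Lemma \ref{rooted mutually_maximally_distant} says this is equivalent to $x$ and $y$ being mutually maximally distant in $H$; if $b\ne a$ (which is possible since $n\ge 2$), Lemma \ref{maximally_distant with root} says it is equivalent to $x,y\in M(v)$, where $M(v)$ denotes the set of vertices of $H$ maximally distant from $v$. Choosing $y=x$ in the second alternative, we obtain: for $x\ne v$, the vertex $(a,x)$ lies in $\partial(G\circ_v H)$ if and only if $x\in M(v)$, or $x$ is mutually maximally distant in $H$ with some vertex of $H$. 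Note that this condition does not involve $a$ at all.

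It then remains to recognise the displayed condition as ``$x\in\partial(H)$''. One direction is immediate: $M(v)\subseteq\partial(H)$ by Lemma \ref{maximally distant with v}, and being mutually maximally distant in $H$ with some vertex is by definition what it means to lie in $\partial(H)$. Conversely, if $x\in\partial(H)$ with $x\ne v$, pick $y$ with $x,y$ mutually maximally distant in $H$: if $y\ne v$ we are in the second alternative, and if $y=v$ then $x$ is in particular maximally distant from $v$, so $x\in M(v)$ and we are in the first. Putting this together with the two preceding steps yields $\partial(G\circ_v H)=\{(a,x):a\in V(G),\ x\in\partial(H)\setminus\{v\}\}=V(G)\times(\partial(H)\setminus\{v\})$. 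If $v\in\partial(H)$ the right-hand side is $V(G)\times(\partial(H)-\{v\})$, which is (i); if $v\notin\partial(H)$ then $\partial(H)\setminus\{v\}=\partial(H)$, giving (ii).

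I expect the main obstacle to be the first step. Lemmas \ref{maximally_distant with root} and \ref{rooted mutually_maximally_distant} detect mutual maximal distance purely from the $H$-coordinates of interior vertices --- in fact Lemma \ref{rooted mutually_maximally_distant} is stated only for vertices different from $v$ --- precisely because a root copy $(a,v)$ acquires extra neighbours in adjacent copies $H_b$. Those extra neighbours must therefore be dealt with by the elementary neighbourhood argument above before the two lemmas can be applied without qualification; once that is done, the rest is bookkeeping, the one structural point worth isolating being that the membership criterion obtained is independent of the base vertex $a$, which is exactly what forces the Cartesian-product shape of the answer.
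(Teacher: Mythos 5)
Your proof is correct and follows essentially the same route as the paper's: the same split into the cases $b=a$ and $b\ne a$, resolved by Lemmas \ref{rooted mutually_maximally_distant} and \ref{maximally_distant with root} respectively, with Lemma \ref{maximally distant with v} bridging $M(v)$ and $\partial(H)$. The only difference is that you explicitly verify that no vertex $(a,v)$ lies in $\partial(G\circ_{v} H)$, a fact the paper's proof asserts without justification.
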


\begin{proof}
Let $(x,y)$ and $(x',y')$ be two mutually maximally distant vertices in $G\circ_{v} H$.  Since  $(V(G)\times \{v\})\cap \partial \left(G\circ_{v} H\right)=\emptyset$, it follows $y,y'\ne v$. We differentiate two cases.

Case 1:  $x=x'$. By Lemma \ref{rooted mutually_maximally_distant}
   we conclude that $(x,y)$ and $(x',y')$ are mutually maximally distant in $G\circ_{v} H$ if and only if  $y$ and $y'$ are mutually maximally distant  in $H$.

Case 2: $x\ne x'$. By Lemma \ref{maximally_distant with root} the vertices $(x,y)$ and $(x',y')$ are mutually maximally distant in $G\circ_{v} H$ if and only if  $y,y'\in M(v)$. Note that, by Lemma \ref{maximally distant with v},  $y,y'\in \partial(H)$.

According to the above cases we conclude that if $(x,y)\in \partial(G\circ_{v} H)$, then $y\in \partial(H)-\{v\}$. Moreover, if $y\in \partial(H)-\{v\}$, then for every $x\in V(G)$ we have $(x,y)\in \partial(G\circ_{v} H)$.

Therefore, if  $v\in \partial(H)$, then   $\partial \left(G\circ_{v} H\right)=V(G)\times (\partial(H)-\{v\})$ and if
 $v\not \in \partial(H)$, then  $\partial \left(G\circ_{v} H\right)=V(G)\times \partial(H).$
\end{proof}

\begin{proposition}\label{simplicial-of-rooted}
Let $G$ be a connected graph of order $n\ge 2$ and let $H$  be a connected graph.
\begin{enumerate}[{\rm (i)}]
\item If   $v\in \sigma(H)$, then  $\sigma \left(G\circ_{v} H\right)=V(G)\times (\sigma(H)-\{v\}).$

\item If   $v\not\in \sigma(H)$, then $\sigma \left(G\circ_{v} H\right)=V(G)\times \sigma(H).$
\end{enumerate}
\end{proposition}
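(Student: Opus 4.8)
The plan is to adapt, almost verbatim, the argument of Proposition~\ref{boundary-of-rooted}: fix an arbitrary vertex $(x,y)$ of $G\circ_v H$ and decide whether $\langle N_{G\circ_v H}((x,y))\rangle$ is complete, splitting into the cases $y\ne v$ and $y=v$. The only facts needed are the explicit description of $E(G\circ_v H)$, namely that every edge either lies inside some copy $H_{u_i}$ and corresponds to an edge of $H$, or joins two root vertices $(u_i,v)$ and $(u_j,v)$ with $u_iu_j\in E(G)$. I will assume, as is implicit throughout (cf.\ Lemma~\ref{rooted mutually_maximally_distant}), that $H$ is nontrivial; if $H=K_1$ then $G\circ_v H\cong G$ and there is nothing of this type to prove.

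First I would handle a vertex $(x,y)$ with $y\ne v$. Then $(x,y)$ has no neighbour outside the copy $H_x$, so $N_{G\circ_v H}((x,y))=\{(x,w):w\in N_H(y)\}$, and since adjacency inside $H_x$ coincides with adjacency in $H$, the bijection $w\mapsto(x,w)$ is an isomorphism between $\langle N_H(y)\rangle$ and $\langle N_{G\circ_v H}((x,y))\rangle$. Consequently $(x,y)\in\sigma(G\circ_v H)$ if and only if $y\in\sigma(H)$.

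Next I would show that a vertex $(x,v)$ is never simplicial. Here $N_{G\circ_v H}((x,v))=(\{x\}\times N_H(v))\cup(N_G(x)\times\{v\})$. Since $G$ is connected of order $n\ge 2$ there is some $x'\in N_G(x)$, and since $H$ is nontrivial and connected there is some $w\in N_H(v)$ (hence $w\ne v$). Both $(x,w)$ and $(x',v)$ lie in $N_{G\circ_v H}((x,v))$, yet they are not adjacent: they belong to different copies, so the only edge that could join them would be of the form $(x,v)(x',v)$, which is impossible because $w\ne v$. Hence $\langle N_{G\circ_v H}((x,v))\rangle$ is not complete and $(x,v)\notin\sigma(G\circ_v H)$.

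Combining the two cases, $(x,y)\in\sigma(G\circ_v H)$ if and only if $y\ne v$ and $y\in\sigma(H)$; that is, $\sigma(G\circ_v H)=V(G)\times(\sigma(H)-\{v\})$ in all cases, which yields (i) when $v\in\sigma(H)$ and (ii) when $v\notin\sigma(H)$, since then $\sigma(H)-\{v\}=\sigma(H)$. I do not expect a genuine obstacle; the one point worth stating carefully is the middle step, where the fact that the edges between distinct copies run only through the roots is precisely what prevents any $(x,v)$ from being simplicial.
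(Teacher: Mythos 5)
Your proof is correct and follows essentially the same route as the paper: the paper likewise observes that $(x,v)$ is never simplicial and that for $y\ne v$ the vertex $(x,y)$ is simplicial in $G\circ_v H$ if and only if $y$ is simplicial in $H$, via the copy $H_x$. You simply supply the details (the non-adjacent pair $(x,w)$, $(x',v)$ witnessing that $(x,v)$ is not simplicial, and the explicit nontriviality caveat for $H$) that the paper leaves as immediate.
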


\begin{proof}
Note that   $(x,v)$ is not simplicial  in $G\circ_{v} H$. Since the following  assertions are equivalent, the result immediately follows.
 \begin{itemize}
 \item  The vertex $(x,y)\in V(G)\times (V(H)-\{v\})$ is simplicial in $G\circ_{v} H$.
  \item For $x\in V(G)$ and $y\ne v$ the vertex $(x,y)$ is simplicial in $H_x$.
  \item The vertex $y\in V(H)-\{v\}$ is simplicial in $H$.
\end{itemize}
\end{proof}

\begin{theorem}\label{ThCompleteSR}
Let $G$ be a connected graph of order $n\ge 2$ and let $H$  be a connected graph such that  $\partial(H)=\sigma(H)$.
\begin{enumerate}[{\rm (i)}]
  \item If   $v\in \partial(H)$, then
$dim_s(G\circ_{v} H) = n(|\partial(H)|-1)-1.$
\item  If   $v\not\in \partial(H)$, then
$dim_s(G\circ_{v} H) = n|\partial(H)|-1.$
\end{enumerate}
\end{theorem}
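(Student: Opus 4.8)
The plan is to compute the strong resolving graph $(G\circ_{v}H)_{SR}$ explicitly --- it will turn out to be a complete graph --- and then apply Theorem~\ref{lem_oellerman} together with the elementary fact that $\alpha(K_m)=m-1$ (a vertex cover of $K_m$ can omit at most one vertex). Throughout, let $M(v)$ be the set of vertices of $H$ maximally distant from $v$, as in Lemma~\ref{maximally_distant with root}, and set
\[
S=\begin{cases}\partial(H)\setminus\{v\} & \text{if } v\in\partial(H),\\[2pt] \partial(H) & \text{if } v\notin\partial(H).\end{cases}
\]
In both cases $v\notin S$, and by Proposition~\ref{boundary-of-rooted} we have $\partial(G\circ_{v}H)=V(G)\times S$, so $(G\circ_{v}H)_{SR}$ has exactly $n|S|$ vertices, where $n|S|$ equals $n(|\partial(H)|-1)$ in case (i) and $n|\partial(H)|$ in case (ii).

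The main step is to show that, under the hypothesis $\partial(H)=\sigma(H)$, one has $M(v)=S$. By Lemma~\ref{maximally distant with v}, $M(v)\subseteq\partial(H)$. Conversely, let $u\in\partial(H)$ with $u\ne v$; since $\partial(H)=\sigma(H)$, the vertex $u$ is simplicial. If $u\notin M(v)$, there is $w\in N_H(u)$ with $d_H(v,w)=d_H(v,u)+1$; letting $z$ be the vertex preceding $u$ on a shortest $v$--$u$ path, we have $z,w\in N_H(u)$, hence $z\sim w$ because $u$ is simplicial, and yet $d_H(v,z)=d_H(v,u)-1$ while $d_H(v,w)=d_H(v,u)+1$, contradicting $|d_H(v,z)-d_H(v,w)|\le 1$. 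Thus $\partial(H)\setminus\{v\}\subseteq M(v)$; since moreover $v\notin M(v)$ (every neighbour of $v$ lies strictly farther from $v$ than $v$ itself, using that $H$ is nontrivial), we conclude $M(v)=S$ in both cases. Running the same triangle-inequality argument with $v$ replaced by an arbitrary $y\in\partial(H)$ shows in addition that any two distinct vertices of $\partial(H)$ are mutually maximally distant in $H$ (equivalently, $H_{SR}\cong K_{|\partial(H)|}$, as already recorded in the Introduction).

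Next I would check that $(G\circ_{v}H)_{SR}$ is complete on its vertex set $V(G)\times S$. Pick two distinct vertices $(a,x)$ and $(b,y)$ with $x,y\in S$. If $a=b$ then $x\ne y$, and since $x,y\ne v$, Lemma~\ref{rooted mutually_maximally_distant} says these vertices are mutually maximally distant in $G\circ_{v}H$ if and only if $x$ and $y$ are mutually maximally distant in $H$, which holds by the previous paragraph. If $a\ne b$, then by Lemma~\ref{maximally_distant with root} they are mutually maximally distant in $G\circ_{v}H$ if and only if $x,y\in M(v)$, which again holds since $M(v)=S$. Hence every pair of distinct vertices of $(G\circ_{v}H)_{SR}$ is adjacent, i.e.\ $(G\circ_{v}H)_{SR}\cong K_{n|S|}$. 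Finally, by Theorem~\ref{lem_oellerman}, $dim_s(G\circ_{v}H)=\alpha((G\circ_{v}H)_{SR})=n|S|-1$; substituting $|S|=|\partial(H)|-1$ in case (i) and $|S|=|\partial(H)|$ in case (ii) yields the two formulae. The only genuinely substantive point is the identification $M(v)=S$, which is exactly where the hypothesis $\partial(H)=\sigma(H)$ is used; everything else is bookkeeping with Proposition~\ref{boundary-of-rooted} and Lemmas~\ref{maximally_distant with root}--\ref{maximally distant with v}, so I expect no real obstacle beyond being careful that $H$ is nontrivial (so that $v\notin M(v)$ and $|\partial(H)|\ge 2$, making the right-hand sides meaningful).
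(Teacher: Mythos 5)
Your proof is correct, and it reaches the same endpoint as the paper --- showing that $(G\circ_{v}H)_{SR}$ is a complete graph on $V(G)\times S$ and then applying Theorem~\ref{lem_oellerman} with $\alpha(K_m)=m-1$ --- but by a different intermediate route. The paper combines Proposition~\ref{boundary-of-rooted} with Proposition~\ref{simplicial-of-rooted} to conclude that $\partial(G\circ_{v}H)=\sigma(G\circ_{v}H)$, and then invokes the general fact (stated in the Introduction) that a graph whose boundary consists entirely of simplicial vertices has a complete strong resolving graph. You instead bypass Proposition~\ref{simplicial-of-rooted} entirely: you prove directly that $M(v)=S$ and that any two distinct boundary vertices of $H$ are mutually maximally distant (your triangle-inequality argument at a simplicial vertex is exactly the proof of that unstated general fact), and then verify completeness of $(G\circ_{v}H)_{SR}$ pairwise via Lemmas~\ref{maximally_distant with root} and~\ref{rooted mutually_maximally_distant}. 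Your version is more self-contained --- it does not lean on the unproved assertion from the Introduction nor on the simplicial-vertex transfer --- at the cost of being longer; the paper's version is shorter but delegates the real content to Proposition~\ref{simplicial-of-rooted} and the folklore fact $\partial(G)=\sigma(G)\Rightarrow G_{SR}\cong K_{|\partial(G)|}$. Your explicit remark that $H$ must be nontrivial (so that $v\notin M(v)$ and Lemma~\ref{rooted mutually_maximally_distant} applies) is a hypothesis the paper leaves implicit, and flagging it is a small point in your favour.
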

\begin{proof}
  Since $\partial(H)=\sigma(H)$, as a direct consequence of Proposition \ref{boundary-of-rooted} and Proposition \ref{simplicial-of-rooted} we obtain that if $v\not\in \partial(H)$, then $\partial \left(G\circ_{v} H\right)=V(G)\times \partial(H)=\sigma\left(G\circ_{v} H\right)$ and if $v\in \partial(H)$, then $\partial \left(G\circ_{v} H\right)=V(G)\times (\partial(H)-\{v\})=\sigma\left(G\circ_{v} H\right).$ Hence, if
  $v\not\in \partial(H)$, then $(G\circ_v H)_{SR}\cong K_{n|\partial(H)|}$ and, if  $v\in \partial(H)$, then $(G\circ_v H)_{SR}\cong K_{n(|\partial(H)|-1)}$. Therefore, the result follows by Theorem \ref{lem_oellerman}.
\end{proof}

We emphasize the following particular cases of Theorem \ref{ThCompleteSR}.
\begin{corollary}\label{corollari-simplicial}
  Let $G$ be a connected graph of order $n\ge 2$.
  \begin{enumerate}[{\rm (i)}]
  \item For any complete graph of order $n'$, $dim_s(G\circ  K_{n'}) = n(n'-1)-1.$
  \item For any  tree $T$  with $l(T)$ leaves,
            $$dim_s(G\circ_{v} T)=\left\{\begin{array}{ll}

                            n(l(T)-1)-1, & \mbox{if $v$ is a leaf of $T$,} \\
                             & \\
                             n\cdot l(T)-1, & \mbox{if $v$ is an inner vertex of $T$.}
                             \end{array} \right.
            $$
  \item Let $G'$ be a connected graph of order $n'$ and let $H=G'\odot (\bigcup_{i=1}^r K_{t_i})$, where $r\ge 2$, $t_i\ge 1$. Then

  $$dim_s(G\circ_{v} H)=\left\{\begin{array}{ll}
                            n\sum_{i=1}^r t_i-n -1, & \mbox{if $v\in \bigcup_{i=1}^r V(K_{t_i})$,} \\
                             & \\
                             n\sum_{i=1}^r t_i -1, & \mbox{if $v\in V(G')$.}
                             \end{array} \right.
            $$
  \end{enumerate}
\end{corollary}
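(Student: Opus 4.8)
The plan is to obtain all three parts as instances of Theorem \ref{ThCompleteSR}. For that theorem to apply, the second factor $H$ must satisfy $\partial(H)=\sigma(H)$, and once this holds the value depends only on $|\partial(H)|$ and on whether the root $v$ belongs to $\partial(H)$. So for each choice of $H$ below I would (a) check that $\partial(H)=\sigma(H)$, (b) compute $|\partial(H)|$, and (c) decide whether $v\in\partial(H)$, and then read off the answer from Theorem \ref{ThCompleteSR}.

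For (i), take $H=K_{n'}$ (the statement is degenerate for $n'=1$, so we may assume $n'\ge 2$). Every vertex of $K_{n'}$ is simplicial and any two distinct vertices are mutually maximally distant, hence $\sigma(K_{n'})=\partial(K_{n'})=V(K_{n'})$ and $|\partial(K_{n'})|=n'$. Since $K_{n'}$ is vertex transitive the rooted product is independent of $v$ and $v\in\partial(K_{n'})$, so Theorem \ref{ThCompleteSR}(i) gives $dim_s(G\circ K_{n'})=n(n'-1)-1$. For (ii), take $H=T$ a tree. As recalled in the Introduction, $\partial(T)$ is exactly the set of leaves of $T$; since every leaf is simplicial and $\sigma(T)\subseteq\partial(T)$ always holds, $\sigma(T)$, $\partial(T)$ and the leaf set of $T$ coincide, so $|\partial(T)|=l(T)$. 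If $v$ is a leaf then $v\in\partial(T)$ and Theorem \ref{ThCompleteSR}(i) gives $n(l(T)-1)-1$; if $v$ is an inner vertex then $v\notin\partial(T)$ and Theorem \ref{ThCompleteSR}(ii) gives $n\,l(T)-1$.

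For (iii), take $H=G'\odot\bigl(\bigcup_{i=1}^{r}K_{t_i}\bigr)$ with $r\ge 2$, and write $u_1,\dots,u_{n'}$ for the vertices of the copy of $G'$ sitting inside $H$ and $K_{t_i}^{(j)}$ for the $i$-th clique in the pendant structure of $u_j$. The core of the argument is to determine $\sigma(H)$ and to verify $\partial(H)=\sigma(H)$. A vertex $w\in V(K_{t_i}^{(j)})$ has $N_H(w)=\bigl(V(K_{t_i}^{(j)})\setminus\{w\}\bigr)\cup\{u_j\}$, whose induced subgraph is complete, so $w\in\sigma(H)$; on the other hand, because $r\ge 2$, every $u_j$ is adjacent to two non-adjacent vertices lying in distinct pendant cliques, so $u_j\notin\sigma(H)$. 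Thus $\sigma(H)=\bigcup_{j=1}^{n'}\bigcup_{i=1}^{r}V(K_{t_i}^{(j)})$. To upgrade this to $\partial(H)=\sigma(H)$ it suffices, since $\sigma(H)\subseteq\partial(H)$, to show that no $u_j$ is mutually maximally distant from any vertex: given $z\ne u_j$, pick a clique $K_{t_i}^{(j)}$ in the pendant structure of $u_j$ not containing $z$ (possible as $r\ge 2$) and any vertex $w$ of it; then $d_H(z,w)=d_H(z,u_j)+1>d_H(z,u_j)$, so $u_j$ fails to be maximally distant from $z$. Hence $\partial(H)=\sigma(H)$, with $|\partial(H)|$ equal to the total number of vertices in the pendant cliques, and $v\in\partial(H)$ exactly when $v$ belongs to one of those cliques; Theorem \ref{ThCompleteSR} then yields the two displayed formulae.

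The counting of simplicial vertices and the final substitution into Theorem \ref{ThCompleteSR} are routine. The one step demanding real care is the verification, in part (iii), that $\partial(H)=\sigma(H)$---equivalently, that no vertex of the $G'$-layer of $H$, despite its ``peripheral'' appearance, is ever mutually maximally distant from another vertex---and this is precisely where the hypothesis $r\ge 2$ is needed.
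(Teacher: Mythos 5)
Your route is exactly the paper's: the corollary is stated there as an immediate consequence of Theorem \ref{ThCompleteSR} with no separate proof, and your verifications that $\partial(H)=\sigma(H)$ in each case (which the paper leaves implicit) are correct. Parts (i) and (ii) are fine.

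In part (iii), however, the final ``routine substitution'' does not actually produce the displayed formulae, and this is a genuine gap in your writeup. You correctly identify $\partial(H)=\sigma(H)$ as the set of all vertices lying in pendant cliques; but since the corona $G'\odot\bigl(\bigcup_{i=1}^{r}K_{t_i}\bigr)$ attaches one copy of $\bigcup_{i=1}^{r}K_{t_i}$ to \emph{each} of the $n'$ vertices of $G'$, this set has cardinality $|\partial(H)|=n'\sum_{i=1}^{r}t_i$, not $\sum_{i=1}^{r}t_i$. Feeding this into Theorem \ref{ThCompleteSR} gives $dim_s(G\circ_v H)=n\bigl(n'\sum_{i=1}^{r}t_i-1\bigr)-1=nn'\sum_{i=1}^{r}t_i-n-1$ when $v$ lies in a pendant clique, and $nn'\sum_{i=1}^{r}t_i-1$ when $v\in V(G')$; these agree with the formulae printed in the corollary only when $n'=1$. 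So either the statement as printed has dropped a factor of $n'$ (likely, since nothing in it forces $G'=K_1$), or you must restrict to $n'=1$; in either case your closing claim that the theorem ``yields the two displayed formulae'' cannot stand as written. Everything else --- the simpliciality of the clique vertices, and the exclusion of the $G'$-layer from the boundary via the $r\ge 2$ hypothesis --- is correct and is precisely the content needed to invoke the theorem.
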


\begin{theorem}\label{ThK_2SR}
Let $G$ be a connected graph of order $n\ge 2$ and let $H$  be a connected graph such that  $H_{SR}\cong \displaystyle\bigcup_{i=1}^{\frac{|\partial(H)|}{2}}K_2$. Let $v\in V(H)$ and let $M(v)$ be the set of vertices of $H$ which are maximally distant from $v$. Let $i(v)$ be the set of isolated vertices of the subgraph of $H_{SR}$ induced by $M(v)$.
\begin{enumerate}[{\rm (i)}]
  \item If   $v\not\in \partial(H)$, then
$dim_s(G\circ_{v} H) = \displaystyle\frac{n(|\partial(H)|+|M(v)|-|i(v)|)-|M(v)|+|i(v)|}{2}$.
\item  If   $v\in \partial(H)$,
then $dim_s(G\circ_{v} H) = \displaystyle\frac{n(|\partial(H)|+|M(v)|-|i(v)|)-|M(v)|+|i(v)|-2}{2}.$
\end{enumerate}
\end{theorem}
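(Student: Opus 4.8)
\medskip

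\noindent\textbf{Proof strategy.}
The plan is to invoke Theorem~\ref{lem_oellerman} and compute the vertex cover number $\alpha(\Gamma)$ of $\Gamma:=(G\circ_v H)_{SR}$, via a maximum independent set and the identity $\alpha(\Gamma)=|V(\Gamma)|-\beta(\Gamma)$, where $\beta$ denotes the independence number. For $S\subseteq\partial(H)$ write $H_{SR}[S]$ for the subgraph of $H_{SR}$ induced by $S$. First I would describe $\Gamma$ explicitly: set $\partial'=\partial(H)$ if $v\notin\partial(H)$ and $\partial'=\partial(H)\setminus\{v\}$ if $v\in\partial(H)$, so that $V(\Gamma)=V(G)\times\partial'$ by Proposition~\ref{boundary-of-rooted}, while $M(v)\subseteq\partial(H)\setminus\{v\}\subseteq\partial'$ by Lemma~\ref{maximally distant with v} together with $v\notin M(v)$. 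By Lemma~\ref{rooted mutually_maximally_distant}, the subgraph of $\Gamma$ induced by each copy $\{a\}\times\partial'$ is isomorphic to $H_{SR}[\partial']$; by Lemma~\ref{maximally_distant with root}, for $a\neq b$ the edges of $\Gamma$ between $\{a\}\times\partial'$ and $\{b\}\times\partial'$ are precisely the pairs $(a,x)(b,y)$ with $x,y\in M(v)$. Since $H_{SR}$ is a perfect matching on $\partial(H)$, every $H_{SR}[S]$ is a disjoint union of, say, $k$ edges and $s$ isolated vertices, and hence has independence number $k+s$.

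Next I would compute $\beta(\Gamma)$. The inter-copy edges force any independent set of $\Gamma$ to meet $\{a\}\times M(v)$ for at most one copy $a$, so in every other copy it lies in $\{a\}\times(\partial'\setminus M(v))$ and is independent there. Since $\beta(H_{SR}[\partial'])\ge\beta(H_{SR}[\partial'\setminus M(v)])$ (the latter being an induced subgraph of the former), it is optimal to allow exactly one privileged copy, giving
\[
\beta(\Gamma)=\beta\bigl(H_{SR}[\partial']\bigr)+(n-1)\,\beta\bigl(H_{SR}[\partial'\setminus M(v)]\bigr),
\]
and this value is attained by placing a maximum independent set of $H_{SR}[\partial']$ in one copy and a maximum independent set of $H_{SR}[\partial'\setminus M(v)]$ in each remaining copy (which induces no edge of $\Gamma$).

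The step I expect to be the main obstacle is the final counting, where the cases $v\notin\partial(H)$ and $v\in\partial(H)$ genuinely differ. In both, $\beta(H_{SR}[\partial'])=|\partial(H)|/2$: when $v\in\partial(H)$, deleting $v$ from the matching $H_{SR}$ merely removes one edge and isolates its other end, leaving the independence number unchanged. For $\beta(H_{SR}[\partial'\setminus M(v)])$ the point is that the isolated vertices of $H_{SR}[\partial'\setminus M(v)]$ are exactly the $H_{SR}$-mates of the vertices of $i(v)$: there are $|i(v)|$ of these if $v\notin\partial(H)$, but only $|i(v)|-1$ if $v\in\partial(H)$, since in that case the $H_{SR}$-neighbour $v^{*}$ of $v$ lies in $i(v)$ (the edge $vv^{*}$ makes $v$ and $v^{*}$ mutually maximally distant in $H$, so $v^{*}\in M(v)$, whereas $v\notin M(v)$ leaves $v^{*}$ isolated in $H_{SR}[M(v)]$) and its mate $v$ has been removed. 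Counting the surviving matching edges then gives $\beta\bigl(H_{SR}[\partial'\setminus M(v)]\bigr)=\tfrac12\bigl(|\partial(H)|-|M(v)|+|i(v)|\bigr)$ or $\tfrac12\bigl(|\partial(H)|-|M(v)|+|i(v)|-2\bigr)$ according as $v\notin\partial(H)$ or $v\in\partial(H)$. Substituting into $\alpha(\Gamma)=n|\partial'|-\beta(\Gamma)$, with $|\partial'|=|\partial(H)|$ in the first case and $|\partial'|=|\partial(H)|-1$ in the second, and simplifying, yields the two claimed formulae (the extra $-2$ in part~(ii) arising jointly from the smaller vertex set and the extra lost isolated vertex). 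Theorem~\ref{lem_oellerman} then finishes the proof.
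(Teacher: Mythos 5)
Your proof is correct and follows essentially the same route as the paper: both describe the structure of $(G\circ_v H)_{SR}$ via Lemmas \ref{maximally_distant with root} and \ref{rooted mutually_maximally_distant} together with Proposition \ref{boundary-of-rooted}, and then compute its vertex cover number. The only cosmetic difference is that you pass through the independence number (one ``privileged'' copy carrying a maximum independent set of $H_{SR}[\partial']$ plus $n-1$ copies avoiding $M(v)$, then $\alpha=|V|-\beta$), whereas the paper counts the cover directly using the mate-set $i'(v)$; your bookkeeping of the isolated vertices of $H_{SR}[\partial'\setminus M(v)]$, including the extra lost vertex coming from $v$'s mate in case (ii), is accurate.
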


\begin{proof}
Let $V(G)=\{x_1,x_2,...,x_n\}$ be the vertex set of $G$ and let $B$ be a vertex cover for $(G\circ_v H)_{SR}$. First we note that by premiss for every $a\in \partial(H)$ there exists exactly one vertex $a'\in \partial(H)$ such that $a$ and $a'$ are adjacent in $H_{SR}$.  We consider the set $i'(v)\subset \partial(H)$  defined in the following way: $a'\in i'(v)$ if and only if there exists $a\in i(v)$ such that $a$ and $a'$ are mutually maximally distant in $H$. Note that $|i'(v)|=|i(v)|$ and, if $v\in \partial(H)$ and $v,v'$ are mutually maximally distant, then $v\in i'(v)$ and $v'\in i(v)$.
Also, since there are no edges in  $H_{SR}$ connecting vertices belonging to  $M(v)\cup i'(v)$ to vertices belonging to $\partial(H)-M(v)\cup i'(v)$, by Lemmas \ref{maximally_distant with root} and \ref{rooted mutually_maximally_distant} we conclude that there are no edges in $(G\circ_vH)_{SR}$ connecting vertices belonging to $V(G)\times (\partial(H)-(M(v)\cup i'(v))$ to vertices belonging to $V(G)\times (M(v)\cup i'(v))$. With this idea in mind, we proceed to prove the results.

In order to prove (i) we consider that $v\not\in \partial(H)$. Note that in this case by Proposition \ref{boundary-of-rooted} (ii),  $\partial(G\circ_v H)=V(G)\times \partial(H)$.
By Lemma \ref{rooted mutually_maximally_distant}  we have that for every mutually maximally distant vertices $a,a'\in \partial(H)-(M(v)\cup i'(v))$ and every $j\in \{1,...,n\}$ the vertices  $(x_j,a)$ and $(x_j,a')$ are mutually maximally distant in $G\circ_{v} H$ and, as a consequence,  $(x_j,a)\not\in B$ if and only if $(x_j,a')\in B$. Thus, the subgraph of $(G\circ_{v} H)_{SR}$ induced by $V(G)\times (\partial(H)-M(v)\cup i'(v))$ is composed by $\frac{n}{2}(|\partial(H)|-|M(v)|-|i'(v)|)$ components isomorphic to $K_2$.

On the other hand, by Lemma \ref{maximally_distant with root} we have that $(x_j,a), (x_k,a)$ are mutually maximally distant in $G\circ_{v} H$, for every $a\in M(v)$ and $j\ne k$. Thus, if $(x_j,a)\not\in B$ for some $j$, then $(x_k,a)\in B$ for every $k\ne j$. Moreover, as above, Lemma \ref{rooted mutually_maximally_distant} allows us to conclude that given two mutually maximally distant vertices $a,a'\in M(v) \cup i'(v)$ it follows that
 $(x_j,a)\not\in B$ if and only if $(x_j,a')\in B$.   Thus, $B$ contains exactly $(n-1)|M(v)+\frac{|M(v)\cup i'(v)|}{2}$ vertices belonging to $V(G)\times (M(v)\cup i'(v))$. Therefore,
\begin{align*}|B|&= \frac{n(|\partial(H)|-|M(v)|-|i(v)|)}{2} + (n-1)|M(v)|+\frac{|M(v)|+|i(v)|}{2} \\&= \displaystyle\frac{n(|\partial(H)|+|M(v)|-|i(v)|)-|M(v)|+|i(v)|}{2}\end{align*} The proof of (i) is complete.

From now on we suppose $v\in \partial(H)$. Note that in this case by Proposition \ref{boundary-of-rooted} (i) we have  $\partial(G\circ_v H)=V(G)\times (\partial(H)-\{v\})$.
To prove (ii) we proceed by analogy  to the proof of (i). In this case we obtain that  the subgraph of $(G\circ_{v} H)_{SR}$ induced by $V(G)\times (\partial(H)-(M(v)\cup i'(v))$ is composed by $\frac{n}{2}|\partial(H)-M(v)\cup i'(v)|=\frac{n}{2}(|\partial(H)|-|M(v)|-|i(v)|)$ components isomorphic to $K_2$ and $B$ contains exactly $(n-1)|M(v)|+\frac{|(M(v)-\{v'\})\cup (i'(v)-\{v\})|}{2}=(n-1)|M(v)|+\frac{|M(v)|+|i(v)|-2}{2}$ vertices of $G\circ_v H$  belonging to $V(G)\times (M(v)\cup (i'(v)-\{v\}))$. Thus,
\begin{align*}|B|&= \frac{n(|\partial(H)|-|M(v)|-|i(v)|)}{2} + (n-1)|M(v)|+\frac{|M(v)|+|i(v)|-2}{2} \\&= \displaystyle\frac{n(|\partial(H)|+|M(v)|-|i(v)|)-|M(v)|+|i(v)|-2}{2}.\end{align*}
The proof of (ii) is complete.
\end{proof}

We conjecture that if $v\not \in \partial(H)$, then $i(v)=i'(v)=\emptyset$. In order to show a particular case of Theorem \ref{ThK_2SR} where $i(v)\ne \emptyset$ we consider the graph $H$  shown in the left hand side of Figure \ref{grafos-i(v)} where $\partial(H)=\{a,a',b,b',v,v'\}$, $M(v)=i(v)=\{a,v'\}$ and $i'(v)=\{a',v\}$. 
In the case of the graph $H$  shown in the right hand side of Figure \ref{grafos-i(v)} we have $\partial(H)=\{a,a',b,b',v,v'\}$, $M(v)=\{a,a',v'\}$, $i(v)=\{v'\}$ and $i'(v)=\{v\}$. In both cases $$B=(V(G)-\{u_n\})\times (M(v)\cup \{b\})\cup \{(u_n,a),(u_n,b)\}$$ is a strong metric basis of $G\circ_v H$ for any graph $G$ with vertex set $V=\{u_1,u_2,...,u_n\}$.

\begin{figure}[h]
  \centering
  \includegraphics[width=0.4\textwidth]{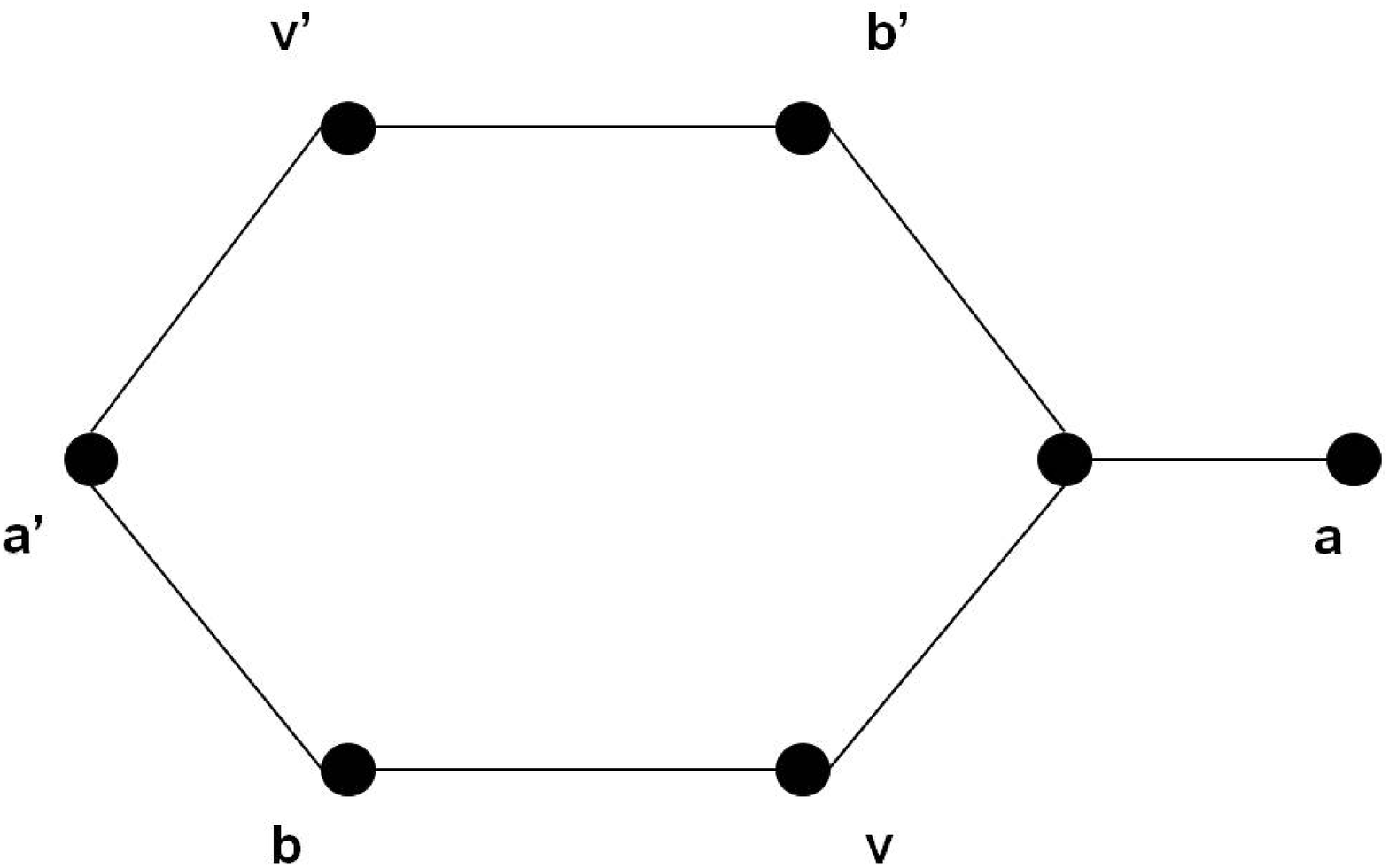}
  \hspace*{2.0cm} \includegraphics[width=0.4\textwidth]{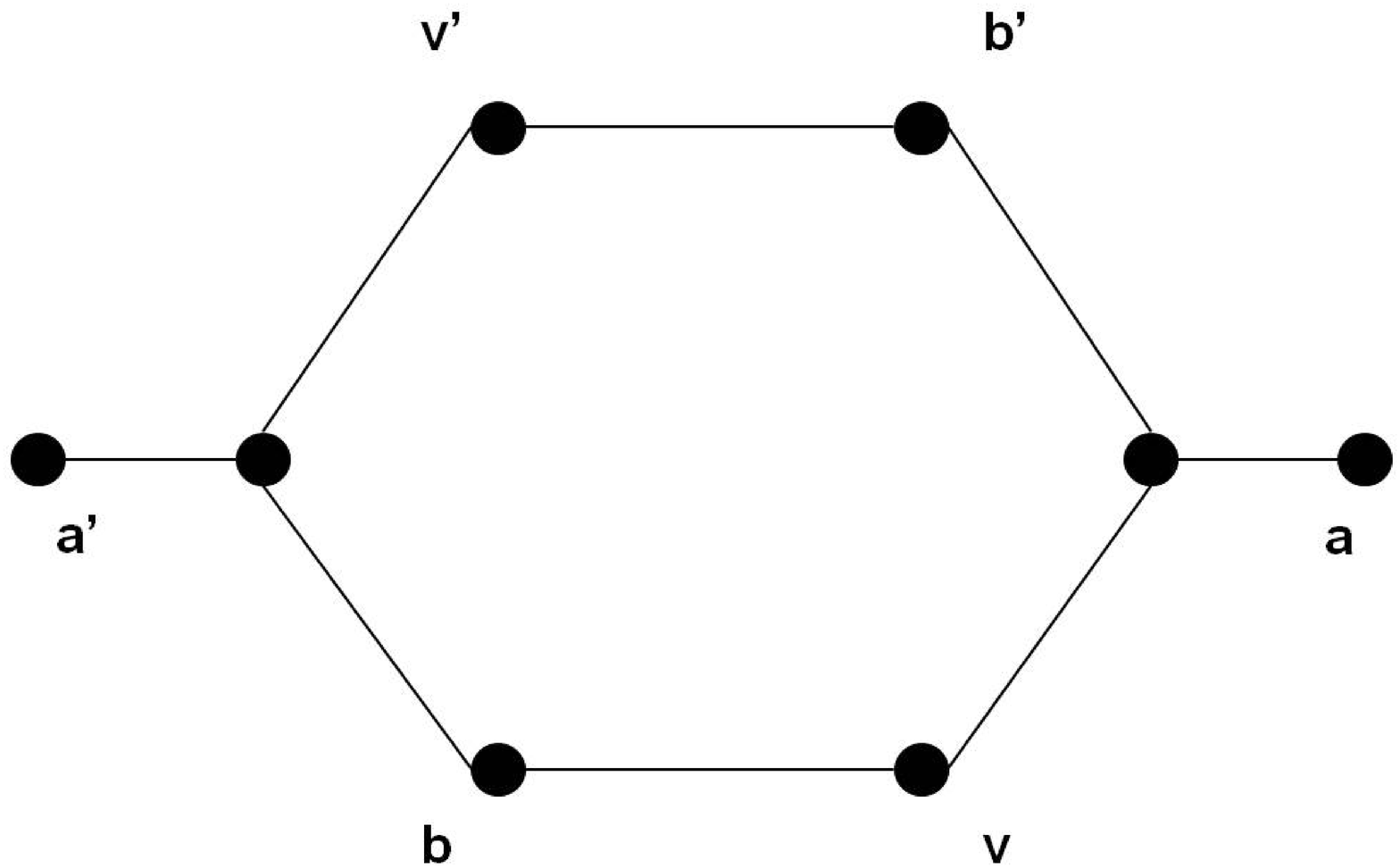}
  \caption{In left hand side graph $i(v)=\{a,v'\}$ and $i'(v)=\{a',v\}$. In right hand side graph $i(v)=\{v'\}$ and $i'(v)=\{v\}$.}\label{grafos-i(v)}
\end{figure}

\begin{corollary}\label{corollaryAntipodal}
Let $G$ be a connected graph of order $n\ge 2$ and let $H$  be a connected 2-antipodal graph of order $n'$. Then
$dim_s(G\circ H) =  \frac{nn'}{2} -1$.
\end{corollary}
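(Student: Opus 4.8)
The plan is to specialise Theorem~\ref{ThK_2SR}(ii). Since $H$ is $2$-antipodal of order $n'$, we have $\partial(H)=V(H)$ (so $|\partial(H)|=n'$ and any chosen root $v$ lies in $\partial(H)$) and $H_{SR}\cong\bigcup_{i=1}^{n'/2}K_2$. Write $\bar x$ for the unique vertex of $H$ with $d_H(x,\bar x)=D(H)$; then $x\mapsto\bar x$ is a fixed-point-free involution, i.e.\ a perfect matching on $V(H)$, and since every antipodal pair $\{x,\bar x\}$ is trivially mutually maximally distant while the mutually-maximally-distant relation is itself a perfect matching on $V(H)$ (this is exactly the content of $H_{SR}\cong\bigcup K_2$), the two matchings coincide: the edges of $H_{SR}$ are precisely the antipodal pairs. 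Thus Theorem~\ref{ThK_2SR} applies with $v\in\partial(H)$, and it remains only to evaluate $M(v)$ and $i(v)$.

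The heart of the argument is the claim that $M(v)=\{\bar v\}$ for every vertex $v$. The inclusion $\bar v\in M(v)$ is immediate. For the reverse inclusion we invoke the identity
$$d_H(a,b)+d_H(a,\bar b)=D(H)\qquad\text{for all }a,b\in V(H),$$
a standard feature of $2$-antipodal graphs (the triangle inequality applied to $b,\bar b$ gives ``$\ge$''; ``$\le$'' is where $2$-antipodality genuinely enters, and can be obtained by a finiteness argument in the spirit of the proof of Lemma~\ref{maximally distant with v}). Granting it, let $u\in M(v)$ with $u\ne\bar v$, put $k=d_H(v,u)$ (so $1\le k\le D(H)-1$), and note $d_H(u,\bar v)=D(H)-k\ge1$ by the identity with $(a,b)=(u,v)$. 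Let $s\in N_H(u)$ be the neighbour of $u$ on a shortest $u$--$\bar v$ path, so $d_H(s,\bar v)=D(H)-k-1$; then
$$d_H(v,s)\ \ge\ d_H(v,\bar v)-d_H(\bar v,s)\ =\ k+1\ =\ d_H(v,u)+d_H(u,s)\ \ge\ d_H(v,s),$$
forcing $d_H(v,s)=d_H(v,u)+1$. Hence $u$ has a neighbour strictly farther from $v$ than $u$ itself, contradicting $u\in M(v)$. Therefore $M(v)=\{\bar v\}$, so $|M(v)|=1$; and since the subgraph of $H_{SR}$ induced by the single vertex $\bar v$ has no edges, $|i(v)|=1$ as well.

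Substituting $|\partial(H)|=n'$ and $|M(v)|=|i(v)|=1$ into Theorem~\ref{ThK_2SR}(ii) (legitimate because $v\in\partial(H)$),
$$dim_s(G\circ_v H)=\frac{n(n'+1-1)-1+1-2}{2}=\frac{nn'-2}{2}=\frac{nn'}{2}-1,$$
and since the value does not depend on $v$, this also justifies writing $G\circ H$. Equivalently, one may check directly that $(G\circ H)_{SR}\cong K_n\cup\frac{n(n'-2)}{2}K_2$ --- a clique on the $n$ copies of $\bar v$, together with, inside each of the $n$ copies of $H$, the $\tfrac{n'-2}{2}$ antipodal pairs of $H$ avoiding $v$ --- whose vertex cover number is $(n-1)+\frac{n(n'-2)}{2}=\frac{nn'}{2}-1$, so Theorem~\ref{lem_oellerman} gives the same value.

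I expect the single genuine difficulty to be the identity $d_H(a,b)+d_H(a,\bar b)=D(H)$, equivalently the claim $M(v)=\{\bar v\}$: this is the only point at which the structure of $2$-antipodal graphs is really used, everything else being a quotation of Theorem~\ref{ThK_2SR} (and Proposition~\ref{boundary-of-rooted}) or a one-line distance estimate. As a sanity check, for $G=K_2$ and $H=C_4$ one computes that $(K_2\circ C_4)_{SR}$ is a perfect matching on $6$ vertices, giving $dim_s(K_2\circ C_4)=3=\tfrac{2\cdot4}{2}-1$, in agreement with the formula.
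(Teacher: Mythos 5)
Your overall strategy --- specialise Theorem~\ref{ThK_2SR}(ii) after observing that $\partial(H)=V(H)$, that $v\in\partial(H)$ for any root, and that the edges of $H_{SR}$ are exactly the antipodal pairs --- is precisely the route the paper intends (the corollary is stated without proof as an immediate consequence of that theorem), and your closing arithmetic, the alternative description of $(G\circ H)_{SR}$, and the $K_2\circ C_4$ check are all internally consistent with your premises. The gap is in the premise itself: the identity $d_H(a,b)+d_H(a,\bar b)=D(H)$, equivalently $M(v)=\{\bar v\}$, is \emph{not} a consequence of the paper's definition of $2$-antipodal. The footnote only requires each vertex to have a \emph{unique} vertex at distance $D(H)$; the identity you invoke is the defining property of the strictly stronger class of \emph{antipodal} (or \emph{balanced even}) graphs in the sense of G\"obel and Veldman, which is in general a proper subclass of the even ($=2$-antipodal) graphs. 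Moreover, your own reasoning is circular at this point: if $M(v)=\{\bar v\}$ for every $v$, then a steepest-ascent walk from any $x$ away from $v$ must terminate at $\bar v$, which already forces $d(v,x)+d(x,\bar v)=D(H)$; so ``$M(v)=\{\bar v\}$ for all $v$'' is \emph{equivalent} to the identity and cannot be derived from it without independent input. The ``finiteness argument in the spirit of Lemma~\ref{maximally distant with v}'' that you gesture at only yields $d(v,u)+d(v,\bar u)=D(H)$ for those $u$ that already lie in $M(v)$; it does not show that $M(v)$ is a singleton.

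It is worth isolating what the corollary actually needs, which is weaker than your claim: substituting $|\partial(H)|=n'$ into Theorem~\ref{ThK_2SR}(ii) gives $\frac{n(n'+|M(v)|-|i(v)|)-|M(v)|+|i(v)|-2}{2}$, which collapses to $\frac{nn'}{2}-1$ whenever $|M(v)|=|i(v)|$ --- the actual size of $M(v)$ is irrelevant. Since in a $2$-antipodal graph the unique mutually-maximally-distant partner of $w$ is $\bar w$, the condition $|M(v)|=|i(v)|$ says exactly that $M(v)$ contains no antipodal pair $\{w,\bar w\}$. Your proposal establishes neither this weaker statement nor the stronger one $M(v)=\{\bar v\}$ from the paper's definition. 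To close the gap you should either prove directly that $M(v)$ contains no antipodal pair for every $2$-antipodal $H$, or explicitly restrict to graphs satisfying the balanced identity (which covers $C_{2k}$, $Q_k$, $K_{2,\dots,2}$ and every example the paper actually uses); note that your sanity check on $K_2\circ C_4$ does not probe this issue, since even cycles are balanced.
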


\begin{theorem}\label{H is cycle}
Let $C_t$ be a cycle of order $t\ge 3$. For any connected graph $G$ of order $r\ge 2$, $$dim_s(G\circ C_t)=r\left\lceil\frac{t}{2}\right\rceil-1.$$
\end{theorem}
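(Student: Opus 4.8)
The plan is to split the argument according to the parity of $t$. If $t$ is even, then $C_t$ is a $2$-antipodal graph, so Corollary \ref{corollaryAntipodal} applies at once and yields $dim_s(G\circ C_t)=\frac{rt}{2}-1=r\lceil t/2\rceil-1$; no further work is needed in this case.

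The substance lies in the odd case $t=2k+1$. Since $C_t$ is vertex transitive we may fix any vertex $v$ as the root and relabel $V(C_t)=\{0,1,\dots,2k\}$ with $v=0$. First I would record the local data: the set $M(v)$ of vertices of $C_t$ maximally distant from $v$ is exactly the pair of vertices at distance $k$ from $v$, that is $M(v)=\{k,k+1\}$, and these are also the only vertices mutually maximally distant from $v$, so $v\in\partial(C_t)=V(C_t)$. Recalling that $(C_t)_{SR}\cong C_{2k+1}$, and that in this graph the two neighbours of $v$ are precisely the vertices of $M(v)$, deleting $v$ from $(C_t)_{SR}$ leaves a path on $2k$ vertices whose two endpoints are the vertices of $M(v)$. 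Combining Proposition \ref{boundary-of-rooted}\,(i) (so that $\partial(G\circ_v C_t)=V(G)\times(V(C_t)-\{v\})$) with Lemmas \ref{maximally_distant with root} and \ref{rooted mutually_maximally_distant}, I would then describe $(G\circ_v C_t)_{SR}$ completely: for each vertex $x$ of $G$ the vertices $\{x\}\times(V(C_t)-\{v\})$ induce a path $P^{(x)}\cong P_{2k}$ with endpoint set $\{(x,k),(x,k+1)\}$; and for every pair $x\ne x'$ all four edges joining $\{(x,k),(x,k+1)\}$ to $\{(x',k),(x',k+1)\}$ are present, with no further edges. Thus $(G\circ_v C_t)_{SR}$ is the disjoint union of $r$ copies of $P_{2k}$ together with the complete multipartite graph $K_{2,\dots,2}$ on the $2r$ endpoints, its parts being the endpoint-pairs of the $P^{(x)}$ (when $k=1$ each path is a single edge and the whole graph is $K_{2r}$).

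By Theorem \ref{lem_oellerman} it then remains to show that the vertex cover number of $(G\circ_v C_t)_{SR}$ equals $r(k+1)-1=r\lceil t/2\rceil-1$. For the lower bound, let $B$ be a vertex cover. For each $x$, $B\cap V(P^{(x)})$ is a vertex cover of $P_{2k}$, hence has size at least $k$; and a short argument shows that if $B$ contains both endpoints of $P^{(x)}$, then $|B\cap V(P^{(x)})|\ge k+1$, because after deleting those two endpoints and the edges at them the remaining $2k-2$ path-vertices still require a cover of size $k-1$. On the other hand $B$ restricted to the $2r$ endpoints must cover $K_{2,\dots,2}$, and any vertex cover of $K_{2,\dots,2}$ omits at most one of the parts; hence for all but at most one value of $x$ both endpoints of $P^{(x)}$ lie in $B$. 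Summing, $|B|\ge(r-1)(k+1)+k=r(k+1)-1$. For the matching upper bound I would exhibit an explicit cover: on one chosen copy take a minimum (size $k$) vertex cover of $P_{2k}$ avoiding both endpoints, and on each of the other $r-1$ copies take a size-$(k+1)$ vertex cover containing both endpoints; this covers every path-edge, and every cross edge has an endpoint lying in one of the $r-1$ ``full'' copies, so it is covered as well.

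I expect the bookkeeping in this last step to be the main obstacle: cleanly establishing the two elementary facts that forcing both endpoints of $P_{2k}$ into a cover costs an extra vertex, and that a vertex cover of $K_{2,\dots,2}$ must contain at least $r-1$ whole parts, and then checking that the degenerate instance $k=1$ (i.e.\ $t=3$, where $(G\circ C_3)_{SR}\cong K_{2r}$ has vertex cover number $2r-1$) is also accounted for by the formula $r\lceil t/2\rceil-1$.
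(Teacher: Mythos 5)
Your proposal is correct and follows essentially the same route as the paper: both reduce to describing $(G\circ C_t)_{SR}$ as $r$ paths $P_{t-1}$ (one per copy of $C_t$, with endpoints the two vertices of $M(v)$) glued along a complete $r$-partite graph $K_{2,\dots,2}$ on the $2r$ endpoints, and then compute the vertex cover number as $2(r-1)+(r-1)\alpha(P_{t-3})+\alpha(P_{t-1})=r\lceil t/2\rceil-1$. Your write-up is in fact slightly more careful than the paper's, which asserts this vertex-cover decomposition without the explicit lower-bound argument (that a cover of $K_{2,\dots,2}$ omits at most one part, and that forcing both endpoints into a cover of $P_{2k}$ costs an extra vertex) that you supply.
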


\begin{proof}
Let $V(G)=\{x_1,x_2,...,x_r\}$ and $V(C_t)=\{y_0,y_1,...,y_{t-1}\}$ be the vertex sets of $G$ and $C_t$, respectively. We assume $y_0\sim y_1\sim...\sim y_{t-1}\sim y_0$ in $C_t$ and from now on all the operations with the subscripts of $y_i$ are done modulo $t$. Since $C_t$ is a vertex transitive graph, we can take without loss of generality $v=y_0$ as the root of $C_t$.

If $t$ be an even number, then $C_t$ is 2-antipodal. So the result follows by Corollary \ref{corollaryAntipodal}. Now let $t$ be an odd number. Note that exactly two vertices $y_{\left\lceil\frac{t}{2}\right\rceil}$ and $y_{\left\lfloor\frac{t}{2}\right\rfloor}$ are maximally distant from $v$ in $C_t$. So, from Lemma \ref{maximally_distant with root} we have that every vertex $(x_i,y_l)$ is mutually maximally distant from $(x_j,y_k)$ in $G\circ C_t$, with $j\ne i$ and $l,k\in \left\{\left\lceil\frac{t}{2}\right\rceil,\left\lfloor\frac{t}{2}\right\rfloor\right\}$. Moreover, from Lemma \ref{rooted mutually_maximally_distant} we have that for every $i\in \{1,2,...,r\}$, $(x_i,y_k)$ is mutually maximally distant from $(x_i,y_{k+\left\lfloor\frac{t}{2}\right\rfloor})$ and $(x_i,y_{k+\left\lceil\frac{t}{2}\right\rceil})$ in $G\circ C_t$ with $k\in \{1,2,...,\left\lfloor\frac{t}{2}\right\rfloor-1,\left\lceil\frac{t}{2}\right\rceil+1,...,t-1\}$. Also, the vertex $(x_i,y_{\left\lfloor\frac{t}{2}\right\rfloor})$ is mutually maximally distant from $(x_i,y_{t-1})$ and the vertex $(x_i,y_{\left\lceil\frac{t}{2}\right\rceil})$ is mutually maximally distant from $(x_i,y_1)$. Thus, we obtain that the graph $(G\circ C_t)_{SR}$ is isomorphic to a graph with set of vertices $U\cup \left(\bigcup_{i=1}^{r}V_i\right)$ where $\langle U\rangle$ is isomorphic to a complete $r$-partite graph $K_{2,2,...,2}$ and for every $i\in \{1,...,r\}$, $\langle V_i\rangle$ is isomorphic to a path graph $P_{t-1}$. Notice that the leaves of $P_{t-1}$ belong to $U$, so for every $i\in \{1,...,r\}$, $|V_i\cap U|=2$. Thus, we have the following:
\begin{align*}
  dim_s(G\circ C_t)&=\alpha((G\circ C_t)_{SR})\\
                     &=\alpha(\langle U\rangle)+(r-1)\alpha(P_{t-3})+\alpha(P_{t-1})\\
                     &=2(r-1)+(r-1)\frac{t-3}{2}+\frac{t-1}{2}\\
                     &=r\left\lceil\frac{t}{2}\right\rceil-1.
\end{align*}
The proof is complete.
\end{proof}

We recall that the \emph{clique number} of a graph $H$, denoted by $\omega(H)$, is the number of vertices in a maximum clique in $H$. Two distinct vertices $x$, $y$ are called \emph{true twins} if $N_H[x] = N_H[y]$. We say that $X\subset V(H)$ is a \emph{twin-free clique} in $H$ if the subgraph induced by $X$ is a clique and for every $u,v\in X$ it follows $N_H[u]\ne N_H[v]$, i.e., the subgraph induced by $X$ is a clique and it contains no true twins. We say that the \emph{twin-free clique number} of $H$, denoted by $\varpi(H)$, is the maximum cardinality among all twin-free cliques in $H$. So, $\omega(H)\ge \varpi(H)$.

\begin{theorem}{\em \cite{strongDimensionCorona}}\label{lem_strDimCoronaGen}
Let $G$ be a connected graph of order $r$. Let $H$ be a graph of order $t$ and maximum degree $\Delta$. If $\Delta\le t-2$ or $r\ge 2$, then $dim_s(G\odot H)=rt-\varpi(H).$
\end{theorem}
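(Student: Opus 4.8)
The plan is to invoke Theorem~\ref{lem_oellerman}, which turns the computation of $dim_s(G\odot H)$ into that of $\alpha\big((G\odot H)_{SR}\big)$, and then to use the identity $\alpha(F)=|V(F)|-\beta(F)$ relating the vertex cover number $\alpha(F)$ and the independence number $\beta(F)$ of a graph $F$. Thus it suffices to (a) show $\partial(G\odot H)=V\big((G\odot H)_{SR}\big)$ has exactly $rt$ vertices, and (b) show $\beta\big((G\odot H)_{SR}\big)=\varpi(H)$. Throughout I would write $u_1,\dots,u_r$ for the vertices of $G$ and $H_i$ for the $i$-th copy of $H$ with apex $u_i$, and use the elementary corona distances: $d_{G\odot H}(h,u_i)=1$ for $h\in V(H_i)$; $d_{G\odot H}(h,h')=\min\{d_H(h,h'),2\}$ for $h,h'\in V(H_i)$; and $d_{G\odot H}(h,h')=d_G(u_i,u_j)+2$ for $h\in V(H_i)$, $h'\in V(H_j)$ with $i\ne j$.

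For step (a) I would first check that no apex $u_i$ lies in $\partial(G\odot H)$, by showing $u_i$ is never maximally distant from any other vertex $z$: if $z=u_j$ or $z\in V(H_j)$ with $j\ne i$, a neighbour of $u_i$ in $V(H_i)$ is strictly farther from $z$ than $u_i$ is; if $z\in V(H_i)$ and $r\ge 2$, a $G$-neighbour $u_k$ of $u_i$ satisfies $d_{G\odot H}(z,u_k)\ge 2>1=d_{G\odot H}(u_i,z)$; and if $z\in V(H_i)$ and $r=1$, maximality of $u_1$ would force $\delta_H(z)=t-1$, contradicting $\Delta\le t-2$. This is exactly where the hypothesis ``$\Delta\le t-2$ or $r\ge 2$'' is used. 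Conversely every $h\in V(H_i)$ lies in $\partial(G\odot H)$: if $r\ge 2$ it is mutually maximally distant with any vertex of $H_j$, $j\ne i$, since all neighbours of each are at equal distance from the other; and if $r=1$, the condition $\Delta\le t-2$ gives $h$ a non-neighbour $h'$ in $H$, and then $d_{G\odot H}(h,h')=2$ makes $h,h'$ mutually maximally distant. Hence $\partial(G\odot H)=\bigcup_{i=1}^r V(H_i)$ has cardinality $rt$.

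For step (b) I would describe $(G\odot H)_{SR}$ by a short case analysis of the mutually-maximally-distant relation. Vertices in different copies are always mutually maximally distant (all neighbours of each are at the common distance $d_G(u_i,u_j)+2$ from the other), hence adjacent in $(G\odot H)_{SR}$. For $h,h'$ in the same copy $H_i$: if $h\not\sim_H h'$ then $d_{G\odot H}(h,h')=2$ and every neighbour of each is within distance $2$ of the other, so they are mutually maximally distant; if $h\sim_H h'$, then $h$ is maximally distant from $h'$ precisely when $N_H[h]\subseteq N_H[h']$, so $h,h'$ are mutually maximally distant exactly when $N_H[h]=N_H[h']$, i.e.\ when they are true twins in $H$. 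Consequently an independent set of $(G\odot H)_{SR}$ lies in a single copy (distinct copies being completely joined), and inside that copy it is a set of vertices pairwise adjacent in $H$ and pairwise non-twin, i.e.\ a twin-free clique of $H$; conversely any twin-free clique in one copy is independent. Therefore $\beta\big((G\odot H)_{SR}\big)=\varpi(H)$, and combining with step (a) and Theorem~\ref{lem_oellerman} yields $dim_s(G\odot H)=\alpha\big((G\odot H)_{SR}\big)=rt-\varpi(H)$.

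The distance bookkeeping and the verification of the mutually-maximally-distant relations are routine; the genuinely delicate point is the hypothesis, since if $r=1$ and $H$ has a universal vertex then the apex $u_1$ enters the boundary, the structure of $(G\odot H)_{SR}$ changes, and the formula fails — this is precisely why ``$\Delta\le t-2$ or $r\ge 2$'' cannot be dropped. A secondary subtlety is that inside a single copy it is \emph{true-twinship}, not mere adjacency in $H$, that produces an edge of $(G\odot H)_{SR}$ between adjacent vertices, which is exactly what makes $\varpi$ rather than $\omega$ the relevant invariant.
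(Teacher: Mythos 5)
Your proof is correct. Note, however, that the paper does not actually prove this statement: it is imported verbatim from \cite{strongDimensionCorona}, so there is no internal proof to compare against. Your argument uses exactly the machinery this paper relies on everywhere else (the reduction of $dim_s$ to the vertex cover number of the strong resolving graph via Theorem~\ref{lem_oellerman}, a determination of the boundary, and the Gallai identity $\alpha=|V|-\beta$), and the two delicate points are handled properly: the hypothesis ``$\Delta\le t-2$ or $r\ge 2$'' is used precisely to keep the apex vertices out of $\partial(G\odot H)$, and the same-copy analysis correctly isolates true-twinship (rather than mere adjacency in $H$) as the source of edges of $(G\odot H)_{SR}$ between $H$-adjacent vertices, which is what makes $\varpi(H)$ rather than $\omega(H)$ the right invariant.
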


Given a vertex $v$ of a graph $H$, we denote by $H-v$ the graph obtained by removing $v$ from $H$. Now, if $v$ is a vertex of $H$ of degree $n-1$, then the rooted product graph $G\circ_v H$ is isomorphic to the corona product graph $G\odot (H-v)$. So, as a direct consequence of Theorem \ref{lem_strDimCoronaGen} we obtain the following result.

\begin{corollary}
Let $G$ be a connected graph of order $r\ge 2$. Let $H$ be a connected graph of order $t\ge 2$ and let $v$ be a vertex of $H$ of degree $t-1$. Then $dim_s(G\circ_{v} H)=r(t-1)-\varpi(H-v)$.
\end{corollary}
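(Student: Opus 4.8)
The plan is to reduce the statement directly to Theorem \ref{lem_strDimCoronaGen} by establishing the graph isomorphism $G\circ_{v} H\cong G\odot (H-v)$ when $\deg_H(v)=t-1$. First I would verify this isomorphism explicitly: in $G\circ_v H$ the copy $H_{u_i}$ is attached to $G$ only through the vertex $(u_i,v)$; since $v$ is universal in $H$, the vertex $(u_i,v)$ is adjacent to every other vertex of $H_{u_i}$, i.e. to every vertex of the copy of $H-v$ sitting inside $H_{u_i}$. Meanwhile the vertices $(u_i,v)$, $i=1,\dots,r$, induce a copy of $G$ in $G\circ_v H$. This is precisely the description of the corona $G\odot(H-v)$: one copy of $G$, an attached copy of $H-v$ at each vertex of $G$, with full join between the $i$-th vertex of $G$ and the $i$-th copy of $H-v$. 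So the map sending $(u_i,v)\mapsto$ the $i$-th vertex of $G$ in $G\odot(H-v)$ and $(u_i,y)\mapsto$ the vertex $y$ in the $i$-th copy of $H-v$ for $y\ne v$ is the desired isomorphism.

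Next I would check that the hypotheses of Theorem \ref{lem_strDimCoronaGen} are met for the pair $(G, H-v)$. Here $H-v$ plays the role of the graph of order $t' = t-1$, and $G$ has order $r$. Since we are given $r\ge 2$, the condition ``$\Delta\le t'-2$ or $r\ge 2$'' is satisfied automatically (the $r\ge 2$ disjunct holds), so no extra assumption on the maximum degree of $H-v$ is needed. One should also note that $H-v$ is a graph of order $t-1\ge 1$; Theorem \ref{lem_strDimCoronaGen} as stated allows $H$ to be an arbitrary graph (not necessarily connected), so there is no issue even if removing $v$ disconnects $H$. Applying Theorem \ref{lem_strDimCoronaGen} with this substitution gives
\[
dim_s(G\circ_{v} H)=dim_s\bigl(G\odot (H-v)\bigr)=r(t-1)-\varpi(H-v),
\]
which is exactly the claimed formula.

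The only genuine point requiring care — and the main (minor) obstacle — is confirming the isomorphism rigorously, in particular making sure there are no extra or missing edges: within each copy $H_{u_i}$ the induced subgraph is a copy of $H$, and deleting the root $(u_i,v)$ leaves a copy of $H-v$; the edges from $(u_i,v)$ to the rest of $H_{u_i}$ are exactly all $t-1$ of them because $v$ has degree $t-1$ in $H$; and the only edges between distinct copies $H_{u_i}$ and $H_{u_j}$ are of the form $(u_i,v)(u_j,v)$ with $u_iu_j\in E(G)$, matching the ``$G$-part'' of the corona and nothing more. Once this bijection-of-edges check is done, the result is immediate from Theorem \ref{lem_strDimCoronaGen}. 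I would keep the written proof to essentially the two sentences describing the isomorphism plus the one-line invocation of Theorem \ref{lem_strDimCoronaGen}, since the excerpt already states ``as a direct consequence.''
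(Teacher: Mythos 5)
Your proposal is correct and follows exactly the paper's route: the paper likewise observes that when $\deg_H(v)=t-1$ the rooted product $G\circ_v H$ is isomorphic to the corona $G\odot(H-v)$ and then invokes Theorem \ref{lem_strDimCoronaGen}, whose hypothesis is met since $r\ge 2$. Your explicit verification of the isomorphism only fills in details the paper leaves implicit.
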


The next result  gives the exact value for the strong metric dimension of $G\odot H$ when $H$ is a triangle free graph.

\begin{theorem}{\em \cite{strongDimensionCorona}}\label{lem_strDimCoronaTriangleFree}
Let $G$ be a connected graph of order $r$ and let $H$ be a triangle free graph of order $t\ge 3$ and maximum degree $\Delta$. If $r\ge 2$ or $\Delta \le t-2$, then $$dim_s(G\odot H)= rt-2.$$
\end{theorem}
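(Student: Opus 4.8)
The plan is to deduce this directly from Theorem~\ref{lem_strDimCoronaGen}, which is stated immediately above. Under the very same hypothesis ($r\ge 2$ or $\Delta\le t-2$), that theorem already gives $dim_s(G\odot H)=rt-\varpi(H)$, so the whole matter reduces to the purely local claim that $\varpi(H)=2$ whenever $H$ is a connected triangle-free graph of order $t\ge 3$. In other words, no further analysis of $(G\odot H)_{SR}$ is needed; one only has to pin down the twin-free clique number of the factor $H$.

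For the upper bound I would argue that, since $H$ is triangle-free, it contains no clique on three vertices, so $\omega(H)\le 2$, and therefore $\varpi(H)\le\omega(H)\le 2$ by the inequality recorded just before Theorem~\ref{lem_strDimCoronaGen}. For the lower bound $\varpi(H)\ge 2$ it suffices to exhibit one edge whose endpoints are not true twins. Since $H$ is connected of order $t\ge 3$, some vertex $x$ has degree at least two, say with neighbours $y\ne z$; because $H$ is triangle-free, $y\not\sim z$, so $z\in N_H[x]$ while $z\notin N_H[y]$ (as $z\ne y$ and $z\not\sim y$), and hence $N_H[x]\ne N_H[y]$. Thus $\{x,y\}$ is a twin-free clique, giving $\varpi(H)\ge 2$. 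Combining the two bounds yields $\varpi(H)=2$, and substituting into Theorem~\ref{lem_strDimCoronaGen} gives $dim_s(G\odot H)=rt-2$.

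There is essentially no obstacle once this reduction is made; the point to watch is that the triangle-free hypothesis is used twice — once to cap $\varpi(H)$ at $2$ (no $K_3$), and once to guarantee that the two ends of the middle edge of a $P_3$ are not true twins — and that connectivity of $H$ is genuinely needed: a component isomorphic to $K_1$ or $K_2$ would still allow $\varpi(H)=1$, so the statement should be read with $H$ connected. Should one instead want a proof bypassing Theorem~\ref{lem_strDimCoronaGen}, the alternative is to use $G\odot H\cong G\circ_v(K_1+H)$ together with Lemmas~\ref{maximally_distant with root} and \ref{rooted mutually_maximally_distant} to describe $(G\odot H)_{SR}$ explicitly — checking that $V(G)$ contributes nothing to the boundary and that $(G\odot H)_{SR}$ is assembled from copies of $H_{SR}$ plus a complete multipartite "gluing'' part — and then to compute its vertex cover number via Theorem~\ref{lem_oellerman}; but the route through $\varpi(H)$ is far shorter and is the one I would take.
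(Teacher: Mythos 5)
Your proposal is correct, and it takes the only route available: the paper does not actually prove this statement (it is quoted verbatim from \cite{strongDimensionCorona}), so the implicit argument is precisely the reduction you carry out, namely substituting $\varpi(H)=2$ into Theorem~\ref{lem_strDimCoronaGen}. Both halves of your computation of $\varpi(H)$ are sound: triangle-freeness gives $\varpi(H)\le\omega(H)\le 2$ via the inequality recorded before Theorem~\ref{lem_strDimCoronaGen}, and a vertex $x$ of degree at least two with neighbours $y\ne z$ gives $z\in N_H[x]\setminus N_H[y]$, so $\{x,y\}$ is a twin-free clique. Your side remark about connectivity is a genuine catch and not a quibble: as transcribed, the hypothesis only requires $H$ to be ``a triangle free graph of order $t\ge 3$'', and for $H=K_2\cup K_1$ or $H=\overline{K_3}$ one has $\varpi(H)=1$, so Theorem~\ref{lem_strDimCoronaGen} yields $rt-1$ rather than $rt-2$; the statement is therefore only valid when $H$ has some component of order at least $3$, and reading $H$ as connected (which is what makes your degree-two vertex exist, given $t\ge 3$) is the right fix. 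One small refinement to your closing comment: the presence of a $K_1$ or $K_2$ component does not by itself force $\varpi(H)=1$ --- that happens only when \emph{every} component of the triangle-free graph $H$ has order at most $2$ --- but this does not affect your proof, which uses connectivity only in the positive direction.
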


As a direct consequence of Theorem \ref{lem_strDimCoronaTriangleFree} we have the following.

\begin{corollary}
Let $G$ be a connected graph of order $r\ge 2$. Let $H$ be a connected graph of order $t\ge 2$ and let $v$ be a vertex of $H$ of degree $t-1$. If $H-v$ is a triangle free graph. Then $$dim_s(G\circ_{v} H)= r(t-1)-2.$$
\end{corollary}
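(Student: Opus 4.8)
The plan is to deduce this corollary from Theorem~\ref{lem_strDimCoronaTriangleFree} via the structural observation already recorded above: when $v$ has degree $t-1$ in $H$, the copy of $H$ hanging at each vertex of $G$ in $G\circ_{v}H$ is a root adjacent to all of its other $t-1$ vertices, which is exactly what one obtains by joining a new vertex to every vertex of $H-v$; hence $G\circ_{v}H\cong G\odot (H-v)$.

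First I would make this isomorphism explicit. Writing $V(G)=\{u_1,\dots,u_r\}$, send $(u_i,v)$ to the $i$-th vertex of $G$ inside the corona and, for $y\ne v$, send $(u_i,y)$ to the copy of $y$ in the $i$-th copy of $H-v$. Then the edges $(u_i,v)(u_j,v)$ correspond to the edges of $G$ in the corona; the edges $(u_i,v)(u_i,y)$ with $y\ne v$ correspond to the corona edges joining the $i$-th vertex of $G$ with the $i$-th copy of $H-v$ (this is the step where the hypothesis $\delta_H(v)=t-1$ is used, guaranteeing $v\sim y$ for every $y\ne v$); and the edges $(u_i,y)(u_i,z)$ with $y,z\ne v$ correspond to the edges of $H-v$ in the $i$-th copy. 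Next I would verify that the hypotheses of Theorem~\ref{lem_strDimCoronaTriangleFree} hold with $H$ replaced by $H-v$: the graph $H-v$ has order $t-1$, it is triangle free by assumption, and since $G$ has order $r\ge 2$ the alternative condition ``$r\ge 2$ or $\Delta\le t-2$'' is automatically satisfied. Applying the theorem gives $dim_s(G\odot (H-v))=r(t-1)-2$, and the isomorphism above transfers this equality to $dim_s(G\circ_{v}H)=r(t-1)-2$.

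Since the heavy lifting is done by the cited corona result, there is no real obstacle here; the only point that needs care is the bookkeeping on orders. Theorem~\ref{lem_strDimCoronaTriangleFree} is stated for a graph of order at least $3$, so strictly one should assume $t-1\ge 3$ (equivalently $H$ has at least four vertices), and treat the few small cases $t\in\{2,3\}$ separately by computing $(G\circ_{v}H)_{SR}$ by hand and invoking Theorem~\ref{lem_oellerman}. Apart from that, the proof is a one-line consequence of Theorem~\ref{lem_strDimCoronaTriangleFree} once the identification $G\circ_{v}H\cong G\odot(H-v)$ is established.
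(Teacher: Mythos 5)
Your proof is correct and follows exactly the paper's (implicit) argument: identify $G\circ_{v}H$ with the corona $G\odot(H-v)$ and apply Theorem~\ref{lem_strDimCoronaTriangleFree} to $H-v$, which has order $t-1$. Your caveat about orders is well taken and is in fact a genuine issue with the statement rather than with your proof: the cited theorem requires the second factor to have order at least $3$, and for $t\in\{2,3\}$ the formula actually fails (e.g.\ $G\circ_{v}K_3\cong G\odot K_2$ has strong metric dimension $2r-1$, not $2r-2$), so the corollary should be read with $t\ge 4$.
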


As the next theorem shows,  the strong metric dimension of $G\odot H$ depends on the diameter of $H$.

\begin{theorem}{\em \cite{strongDimensionCorona}}\label{lem_strDimCoronaDiameter}
Let $G$ be a connected graph of order $r$. Let $H$ be a graph of order $t$ and maximum degree $\Delta$.
\begin{enumerate}[{\rm (i)}]
\item  If $H$ has diameter two and either $\Delta\le t-2$ or $r\ge 2$, then $$dim_s(G\odot H)=(r-1)t+dim_s(H).$$
\item  If $H$ is not connected or its diameter is greater than two, then $$dim_s(G\odot H)=(r-1)t+dim_s(K_1 + H).$$
\end{enumerate}
\end{theorem}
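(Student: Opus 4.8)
The plan is to reduce everything to the rooted-product lemmas via the isomorphism $G\odot H\cong G\circ_{v}(K_1+H)$, where $v$ is the vertex of $K_1$; I treat $r\ge 2$ in detail, the case $r=1$ (where $G\odot H=K_1+H$) following along the same lines once one observes that the hypotheses force $H$ to have no universal vertex. In $K_1+H$ the root $v$ is adjacent to all $t$ vertices of $H$, so every $w\in V(H)$ is at distance $1$ from $v$ with no neighbour of $w$ farther; hence the set $M(v)$ of vertices of $K_1+H$ maximally distant from $v$ equals $V(H)$, and $V(H)\subseteq\partial(K_1+H)$ by Lemma \ref{maximally distant with v}. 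Since, by the proof of Proposition \ref{boundary-of-rooted}, no vertex $(u_i,v)$ lies in $\partial(G\circ_v(K_1+H))$, Proposition \ref{boundary-of-rooted} yields $\partial(G\odot H)=V(G)\times V(H)$.

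Next I would read off $(G\odot H)_{SR}$ from Lemmas \ref{maximally_distant with root} and \ref{rooted mutually_maximally_distant}. Since $M(v)=V(H)$, for $i\ne j$ every pair $(u_i,a),(u_j,b)$ with $a,b\in V(H)$ is mutually maximally distant in $G\odot H$, whereas $(u_i,a)$ and $(u_i,b)$ are mutually maximally distant in $G\odot H$ if and only if $a,b$ are mutually maximally distant in $K_1+H$. Letting $R$ be the graph on vertex set $V(H)$ whose edges are the mutually-maximally-distant pairs of $K_1+H$, it follows that $(G\odot H)_{SR}$ consists of $r$ vertex-disjoint copies of $R$ with all edges present between distinct copies. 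Any independent set of such a graph is contained in a single copy of $R$, so if $\beta$ denotes the largest size of an independent set of $R$, then $\beta$ is also the largest size of an independent set of $(G\odot H)_{SR}$; since a vertex set is a vertex cover if and only if its complement is an independent set, $\alpha((G\odot H)_{SR})=rt-\beta$ and $\alpha(R)=t-\beta$, whence $\alpha((G\odot H)_{SR})=(r-1)t+\alpha(R)$. By Theorem \ref{lem_oellerman}, $dim_s(G\odot H)=(r-1)t+\alpha(R)$.

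It remains to compute $\alpha(R)$. Because every distance in $K_1+H$ is at most $2$, a short case distinction shows that $a,b\in V(H)$ are mutually maximally distant in $K_1+H$ if and only if $ab\notin E(H)$ or $N_H[a]=N_H[b]$, and that $v$ and $a\in V(H)$ are mutually maximally distant in $K_1+H$ if and only if $a$ is a universal vertex of $H$. In case (i), using $D(H)=2$, the analogous computation inside $H$ shows the mutually-maximally-distant pairs of $H$ are again exactly the $a,b$ with $ab\notin E(H)$ or $N_H[a]=N_H[b]$; hence $R$ is $H_{SR}$ together with possibly some isolated vertices, so $\alpha(R)=\alpha(H_{SR})=dim_s(H)$ and the first formula follows. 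In case (ii), $H$ disconnected or of diameter greater than two has no universal vertex; then $v$ is mutually maximally distant with no vertex of $K_1+H$, while every vertex of $H$ is (with a non-neighbour), so $(K_1+H)_{SR}=R$ and $\alpha(R)=dim_s(K_1+H)$, which gives the second formula. I expect the delicate point to be case (i) when $\Delta=t-1$ (so $H$ has a universal vertex) and $r\ge 2$: here $dim_s(K_1+H)=dim_s(H)+1$, and yet the answer must still be $(r-1)t+dim_s(H)$. The extra unit does not surface because the graph controlling the count is $R=H_{SR}$ (up to isolated vertices), not $(K_1+H)_{SR}$; in the latter the vertex $v$ forms, together with the universal vertices of $H$, an extra complete component that raises the vertex cover number by one but is absent from $R$.
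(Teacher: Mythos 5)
The paper does not actually prove this statement: it is imported verbatim from \cite{strongDimensionCorona} and used here as a black box (indeed, the paper applies it in the \emph{opposite} direction, deducing a corollary about $G\circ_{v}H$ when $\deg(v)=t-1$ from it). So there is no in-paper proof to compare against; what you have written is an independent derivation, and it is correct. Your route -- identifying $G\odot H$ with $G\circ_{v}(K_1+H)$ and then running the paper's own machinery (Lemmas \ref{maximally_distant with root}, \ref{rooted mutually_maximally_distant}, \ref{maximally distant with v}, Proposition \ref{boundary-of-rooted} and Theorem \ref{lem_oellerman}) -- is a pleasant inversion of the paper's logic. The key steps all check out: $M(v)=V(H)$ in $K_1+H$, hence $\partial(G\odot H)=V(G)\times V(H)$ and $(G\odot H)_{SR}$ is the join of $r$ copies of your graph $R$; the Gallai-type identity $\alpha=rt-\beta(R)=(r-1)t+\alpha(R)$ is valid because every vertex of $V(G)\times V(H)$ has a cross-copy neighbour; the characterization of mutually maximally distant pairs in $K_1+H$ (non-adjacent, or true twins) and its coincidence with the characterization inside $H$ when $D(H)=2$ is exactly right; and you correctly isolate the only delicate point, namely that a universal vertex of $H$ (allowed in (i) when $r\ge 2$) creates an edge of $(K_1+H)_{SR}$ incident to $v$ that is invisible in $R$, which is why the answer involves $dim_s(H)$ rather than $dim_s(K_1+H)$. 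Two small caveats worth making explicit if you write this up: the rooted-product lemmas of the paper are stated for $G$ of order $n\ge 2$ and for nontrivial factors, so the $r=1$ case really does require the separate direct computation in $K_1+H$ that you sketch (and there the hypothesis $\Delta\le t-2$ is precisely what guarantees $v\notin\partial(K_1+H)$); and in case (ii) you should note that $\alpha(R)=\alpha((K_1+H)_{SR})$ holds even if $R$ had isolated vertices, since isolated vertices never affect the vertex cover number -- though, as you observe, the absence of a universal vertex in fact forces every vertex of $H$ into $\partial(K_1+H)$.
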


Therefore, as a consequence of Theorem \ref{lem_strDimCoronaDiameter} we obtain the following result for $G\circ_{v} H$.

\begin{corollary}
Let $G$ be a connected graph of order $r\ge 2$. Let $H$ be a graph of order $t\ge 2$ and let $v$ be a vertex of $H$ of degree $t-1$.
\begin{enumerate}[{\rm (i)}]
\item  If $H-v$ has diameter two, then $$dim_s(G\circ_{v} H)=(r-1)(t-1)+dim_s(H-v).$$
\item  If $H-v$ has diameter greater than two, then $$dim_s(G\circ_{v} H)=(r-1)(t-1)+dim_s(H).$$
\end{enumerate}
\end{corollary}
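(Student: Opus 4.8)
The plan is to reduce the statement to Theorem~\ref{lem_strDimCoronaDiameter} by means of the isomorphism $G\circ_{v}H\cong G\odot (H-v)$ recorded immediately before the corollary. So the first step is to make that reduction precise and to extract the structural facts about $H$ that we will need. Since $v$ has degree $t-1$ in $H$, it is adjacent to every other vertex of $H$; hence $H$ is the join $H\cong K_1+(H-v)$, where the copy of $K_1$ is the root $v$, and in particular $H-v$ is a graph of order $t-1$. Combining this with the stated fact that a rooted product along a universal vertex is a corona product, we obtain $G\circ_{v}H\cong G\odot(H-v)$ and therefore $dim_s(G\circ_{v}H)=dim_s\!\big(G\odot(H-v)\big)$.

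Next I would apply Theorem~\ref{lem_strDimCoronaDiameter} with the graph $H-v$ (of order $t-1$) in the role of the second factor, using the hypothesis $r\ge 2$ to discharge the side condition ``$\Delta\le t-2$ or $r\ge 2$'' that appears in item~(i) of that theorem. For part~(i): if $H-v$ has diameter two, then Theorem~\ref{lem_strDimCoronaDiameter}(i) yields $dim_s\!\big(G\odot(H-v)\big)=(r-1)(t-1)+dim_s(H-v)$, which is exactly the claimed value. For part~(ii): if $H-v$ has diameter greater than two (so in particular it is connected, the diameter being defined), then Theorem~\ref{lem_strDimCoronaDiameter}(ii) gives $dim_s\!\big(G\odot(H-v)\big)=(r-1)(t-1)+dim_s\!\big(K_1+(H-v)\big)$; invoking the join identity $K_1+(H-v)\cong H$ from the first step converts this into $(r-1)(t-1)+dim_s(H)$, as required.

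The argument is essentially bookkeeping, so there is no substantial obstacle; the only points that require care are correctly matching the hypotheses of Theorem~\ref{lem_strDimCoronaDiameter} (translating the order-$t$ convention there to order $t-1$ here, and checking that $r\ge 2$ covers the side condition in item~(i)), and noticing the join decomposition $H\cong K_1+(H-v)$, which is what lets us rewrite $dim_s\!\big(K_1+(H-v)\big)$ back as $dim_s(H)$ in the diameter-greater-than-two case.
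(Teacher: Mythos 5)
Your proposal is correct and follows exactly the route the paper intends: it uses the observation (stated just before the corollary) that a root of degree $t-1$ gives $G\circ_v H\cong G\odot(H-v)$ with $H\cong K_1+(H-v)$, and then applies Theorem \ref{lem_strDimCoronaDiameter} to $H-v$ of order $t-1$, using $r\ge 2$ for the side condition. The paper states the corollary as a direct consequence without writing out these steps, so your bookkeeping is a faithful expansion of the same argument.
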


The strong metric dimension of $G\odot H$ depends on the existence or not of true twins in $H$. In this sense, the following result was presented in \cite{strongDimensionCorona}.

\begin{theorem}{\em \cite{strongDimensionCorona}}\label{lem_strDimCoronaTwins}
Let $G$ be a connected graph of order $r$ and let $H$ be a graph of order $t$. Let $c(H)$ be the number of vertices of $H$ having degree $t-1$.
\begin{enumerate}[{\rm (i)}]
\item  If $H$ has no true twins and $r\ge 2$, then $$dim_s(G\odot H) = rt-\omega(H).$$
\item  If the only true twins of $H$ are vertices of degree $t-1$ and $r\ge 2$, then $$dim_s(G\odot H) = rt+c(H)-1-\omega(H).$$
\end{enumerate}
\end{theorem}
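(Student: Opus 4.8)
The plan is to deduce the statement from Theorem~\ref{lem_strDimCoronaGen}: since here $r\ge 2$, that theorem already gives $dim_s(G\odot H)=rt-\varpi(H)$, so the whole problem collapses to evaluating the twin-free clique number $\varpi(H)$ under each of the two hypotheses. Thus no further analysis of the strong resolving graph of $G\odot H$ is needed, and the two parts become purely combinatorial claims about the cliques of $H$.

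For part (i), assume $H$ has no true twins. Then for every clique $X$ of $H$ and all distinct $u,v\in X$ we automatically have $N_H[u]\ne N_H[v]$, so every clique of $H$ is a twin-free clique; taking a maximum clique shows $\varpi(H)=\omega(H)$, and Theorem~\ref{lem_strDimCoronaGen} gives $dim_s(G\odot H)=rt-\omega(H)$.

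For part (ii), write $U$ for the set of the $c(H)$ vertices of $H$ of degree $t-1$ (the universal vertices of $H$); we may assume $c(H)\ge 1$, since otherwise there are no true twins at all and part (i) applies. I would first record two elementary observations: any two vertices of $U$ are true twins (a universal vertex $w$ has $N_H[w]=V(H)$), and $U$ together with any clique of $H$ is again a clique (a universal vertex is adjacent to every other vertex). From the second observation, a maximum clique $C$ must contain $U$ — otherwise $C\cup U$ would be strictly larger — and, fixing $w_0\in U$, the set $C'=C\setminus(U\setminus\{w_0\})$ is a clique with $|C'|=\omega(H)-c(H)+1$; it is twin-free because, by hypothesis, any true-twin pair inside a clique lies in $U$, while $C'\cap U=\{w_0\}$. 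Hence $\varpi(H)\ge\omega(H)-c(H)+1$. Conversely, if $X$ is any twin-free clique then $|X\cap U|\le 1$ by the first observation, and $X\cup U$ is a clique, so $\omega(H)\ge|X\cup U|=|X|+c(H)-|X\cap U|\ge|X|+c(H)-1$, i.e.\ $|X|\le\omega(H)-c(H)+1$. Therefore $\varpi(H)=\omega(H)-c(H)+1$, and Theorem~\ref{lem_strDimCoronaGen} yields $dim_s(G\odot H)=rt-\varpi(H)=rt+c(H)-1-\omega(H)$.

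The crux is the evaluation of $\varpi(H)$ in part (ii): one must see that universal vertices can always be absorbed into a maximum clique (so they are never ``wasted'') and that, under the hypothesis, they are the only obstruction to a clique being twin-free; everything else is bookkeeping. I should also remark that if one wished to avoid invoking Theorem~\ref{lem_strDimCoronaGen}, one could argue directly from Theorem~\ref{lem_oellerman}: using the distance formula for $G\odot H$ it is not hard to check that $(G\odot H)_{SR}$ is the join of $r$ copies of the graph on $V(H)$ in which $x\sim y$ iff $xy\notin E(H)$ or $N_H[x]=N_H[y]$, that its independence number equals $\varpi(H)$, and hence that its vertex cover number is $rt-\varpi(H)$ — but this merely re-proves Theorem~\ref{lem_strDimCoronaGen} before applying the same combinatorics.
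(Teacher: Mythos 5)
This theorem is quoted from \cite{strongDimensionCorona} and the present paper gives no proof of it, so there is nothing internal to compare against; judged on its own, your derivation is correct. Reducing everything to Theorem~\ref{lem_strDimCoronaGen} (applicable since $r\ge 2$) and then computing $\varpi(H)$ is exactly the natural route: in (i) the absence of true twins makes every clique twin-free, so $\varpi(H)=\omega(H)$; in (ii) your two observations --- that the universal vertices are pairwise true twins, and that adjoining them to any clique yields a clique --- correctly pin down $\varpi(H)=\omega(H)-c(H)+1$ via the two inequalities you give. One small remark on the statement rather than your argument: when $c(H)=0$ the hypothesis of (ii) is vacuously satisfied but its formula disagrees with (i) by $1$, so (ii) must implicitly assume that true twins actually exist (equivalently $c(H)\ge 2$); your aside that the case $c(H)=0$ falls under (i) is the right way to read it, and you correctly note that $c(H)=1$ also forces ``no true twins'' yet makes the two formulas agree.
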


Our next result is an interesting consequence of Theorem \ref{lem_strDimCoronaTwins}.

\begin{corollary}\label{prop strDimRootTwins}
Let $G$ be a connected graph of order $r\ge 2$. Let $H$ be a connected graph of order $t\ge 2$ and let $v$ be a vertex of $H$ of degree $t-1$. Let $c(H-v)$ be the number of vertices of $H-v$ having degree $t-2$.
\begin{enumerate}[{\rm (i)}]
\item  If $H-v$ has no true twins, then $$dim_s(G\circ_{v} H) = r(t-1)-\omega(H-v).$$
\item  If the only true twins of $H-v$ are vertices of degree $t-2$, then $$dim_s(G\circ_{v} H) = r(t-1)+c(H-v)-1-\omega(H-v).$$
\end{enumerate}
\end{corollary}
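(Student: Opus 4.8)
The plan is to reduce the statement to the known results on corona products. The crucial observation, already recorded in the text preceding this corollary, is that since $v$ has degree $t-1$ in $H$, the root $v$ of each copy of $H$ in $G\circ_v H$ is adjacent to all the remaining $t-1$ vertices of that copy; hence $G\circ_v H$ is obtained from $G$ by attaching a copy of $H-v$ to each vertex of $G$ and joining that vertex to every vertex of its copy, i.e., $G\circ_v H\cong G\odot (H-v)$. Consequently $dim_s(G\circ_v H)=dim_s(G\odot (H-v))$, and it suffices to evaluate the right-hand side using Theorem \ref{lem_strDimCoronaTwins}.

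Write $H':=H-v$, a graph of order $t-1$. I would apply Theorem \ref{lem_strDimCoronaTwins} with $H$ replaced by $H'$ and $t$ replaced by $t-1$ (note that no connectivity of the second factor is needed there, so it is irrelevant whether $H-v$ is connected). The hypothesis $r\ge 2$ is given, so the side conditions of that theorem are satisfied. For part (i): the assumption that $H-v$ has no true twins is precisely the hypothesis of Theorem \ref{lem_strDimCoronaTwins}(i) applied to $H'$, and since $\omega(H')=\omega(H-v)$, that theorem yields $dim_s(G\odot H')=r(t-1)-\omega(H-v)$, which gives (i).

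For part (ii): in Theorem \ref{lem_strDimCoronaTwins} the quantity ``degree $t-1$'' refers to one less than the order of the second factor; since $H'$ has order $t-1$, this becomes ``degree $t-2$'', matching exactly the hypothesis of (ii) that the only true twins of $H-v$ have degree $t-2$. Likewise $c(H')$, the number of vertices of $H'$ of degree $(t-1)-1=t-2$, is exactly $c(H-v)$. Plugging these into Theorem \ref{lem_strDimCoronaTwins}(ii) gives $dim_s(G\circ_v H)=dim_s(G\odot H')=r(t-1)+c(H-v)-1-\omega(H-v)$, as claimed.

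The argument is essentially immediate once the isomorphism $G\circ_v H\cong G\odot(H-v)$ is in hand; I do not anticipate a real obstacle, only the bookkeeping of the off-by-one shift in vertex degrees caused by passing from $H$ (order $t$) to $H-v$ (order $t-1$), together with a quick check of the degenerate case $t=2$, where $H-v\cong K_1$ has no true twins and falls under part (i), giving $dim_s(G\circ_v H)=r-1$.
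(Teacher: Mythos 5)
Your proposal is correct and follows exactly the paper's route: the paper derives this corollary from the isomorphism $G\circ_{v} H\cong G\odot (H-v)$ (valid because $v$ has degree $t-1$) together with Theorem \ref{lem_strDimCoronaTwins} applied to the graph $H-v$ of order $t-1$. Your bookkeeping of the degree shift from $t-1$ to $t-2$ and the identification of $c(H-v)$ matches what the paper intends, so there is nothing to add.
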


\section{Tight bounds}\label{bounds}

\begin{lemma}\label{Lemmmadivide}
  Let $G$ and $H$ be two connected graphs. Given $x\in V(G)$, $v\in V(H)$ and a strong metric basis $B$ of $G\circ_v H$ let $B_x=B\cap (\{x\}\times V(H))$ and let $M(v)$ be the set of vertices of $ H$ which are maximally distant from $v$. Then the following assertions hold.
  \begin{enumerate}[{\rm (i)}]
   \item   $|B_x|\ge dim_s(H)-1$.

   \item  If  $B_x\supset \{x\}\times M(v)$, then $|B_x|\ge dim_s(H)$.

   \item If $v$ does not belong to any strong metric basis of $H$, then  $|B_x|\ge dim_s(H)$.
   \end{enumerate}
\end{lemma}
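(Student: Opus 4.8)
The plan is to work entirely with vertex covers via Theorem~\ref{lem_oellerman} and to identify the subgraph of $(G\circ_v H)_{SR}$ induced by a single copy $\{x\}\times V(H)$ with the graph $H_{SR}-v$. First I would observe that, by Theorem~\ref{lem_oellerman}, $B$ is a vertex cover of $(G\circ_v H)_{SR}$. Since $(x,v)\notin\partial(G\circ_v H)$ (as noted in the proof of Proposition~\ref{boundary-of-rooted}) and since, by Lemma~\ref{rooted mutually_maximally_distant}, two vertices $(x,p),(x,q)$ with $p,q\ne v$ are mutually maximally distant in $G\circ_v H$ exactly when $p,q$ are mutually maximally distant in $H$, the subgraph of $(G\circ_v H)_{SR}$ induced by $\{x\}\times V(H)$ is isomorphic --- via the projection $(x,p)\mapsto p$ --- to $H_{SR}-v$ (which is $H_{SR}$ itself when $v\notin\partial(H)$), by Proposition~\ref{boundary-of-rooted}. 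As $B$ covers every edge of $(G\circ_v H)_{SR}$, it covers every edge lying inside this copy, and the endpoint of each such edge that lies in $B$ belongs to $B_x$; hence the image $C\subseteq V(H)$ of $B_x$ under the projection is a vertex cover of $H_{SR}-v$, with $|B_x|=|C|$. Finally, Theorem~\ref{lem_oellerman} applied to $H$ gives $\alpha(H_{SR})=dim_s(H)$.

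Granting this, part~(i) is immediate: if $C$ is a vertex cover of $H_{SR}-v$, then $C\cup\{v\}$ is a vertex cover of $H_{SR}$, so $\alpha(H_{SR})\le\alpha(H_{SR}-v)+1$, whence $|B_x|=|C|\ge\alpha(H_{SR}-v)\ge\alpha(H_{SR})-1=dim_s(H)-1$.

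For part~(ii), if $v\notin\partial(H)$ there is nothing to do, since then $H_{SR}-v=H_{SR}$ and already $|B_x|=|C|\ge\alpha(H_{SR})=dim_s(H)$; so assume $v\in\partial(H)$. Every edge of $H_{SR}$ incident with $v$ joins $v$ to a vertex $u$ that is mutually maximally distant from $v$ in $H$, so in particular $u\in M(v)$. Under the hypothesis $B_x\supseteq\{x\}\times M(v)$ we get $C\supseteq M(v)$, so $C$ covers all edges of $H_{SR}$ incident with $v$, as well as all edges of $H_{SR}-v$; thus $C$ is a vertex cover of $H_{SR}$, and $|B_x|=|C|\ge\alpha(H_{SR})=dim_s(H)$. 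For part~(iii) I would argue by contradiction, again reducing at once to the case $v\in\partial(H)$ (when $v\notin\partial(H)$ the bound $|B_x|=|C|\ge\alpha(H_{SR})=dim_s(H)$ is automatic). Suppose $|B_x|=|C|\le dim_s(H)-1$. Then $C\cup\{v\}$ is a vertex cover of $H_{SR}$ of cardinality at most $dim_s(H)=\alpha(H_{SR})$, hence a minimum vertex cover of $H_{SR}$, i.e.\ a strong metric basis of $H$ by Theorem~\ref{lem_oellerman}; but it contains $v$, contradicting the assumption that $v$ lies in no strong metric basis of $H$. Therefore $|B_x|\ge dim_s(H)$.

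The real content is the structural identification in the first paragraph --- that a single copy induces $H_{SR}-v$ and that $B_x$ projects \emph{bijectively} onto a vertex cover of it --- after which (i)--(iii) reduce to the elementary fact that deleting one vertex decreases the vertex cover number by at most one, together with the observation that the edges of $H_{SR}$ at $v$ reach only $M(v)$. The point needing the most care is treating the cases $v\in\partial(H)$ and $v\notin\partial(H)$ uniformly, since in the latter $v$ is not even a vertex of $H_{SR}$.
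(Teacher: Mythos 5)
Your proof is correct and follows essentially the same route as the paper's: both identify the copy $\{x\}\times V(H)$ with $H$, observe that $(x,v)\notin B_x$, use the fact that adding $v$ (resp.\ $(x,v)$) to the projected cover yields a vertex cover of $H_{SR}$, and then deduce (i)--(iii) from Theorem~\ref{lem_oellerman}, with (ii) resting on the observation that the $H_{SR}$-neighbours of $v$ lie in $M(v)$. The only cosmetic differences are that you phrase the setup via the induced subgraph $H_{SR}-v$ of $(G\circ_v H)_{SR}$ and cast (iii) as a contradiction, whereas the paper works with $(H_x)_{SR}$ and states the same argument directly.
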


\begin{proof}
  First we consider a pair $(x,y),(x,y')$ of adjacent vertices in $(H_{x})_{SR}$, where $y,y'\ne v$. Since $B$ is a vertex cover of $(G\circ_v H)_{SR}$, either  $(x,y)\in B_x$ or  $(x,y')\in B_x$. Thus, $B_x\cup\{(x,v)\}$ is a vertex cover of $(H_{x})_{SR}$.  Note that $(x,v)\not\in \partial(G\circ_{v} H)$ and, as a consequence, $(x,v)\not\in B_x$. Hence, $|B_x|+1=|B_x\cup \{(x,v)\}|\ge dim_s(H_{x})=dim_s(H)$. Therefore, (i) follows.

   Now we suppose $B_x\supset \{x\}\times M(v)$. If $(x,y)$ and $(x,v)$ are adjacent in $(H_{x})_{SR}$, then $y\in M(v)$. So the edge  $\{(x,y),(x,v)\}$ of $(H_{x})_{SR}$ is covered by $(x,y)\in B_x$. Thus, $B_x$ is a vertex cover of $(H_{x})_{SR}$ and, as a result,  $|B_x|\ge dim_s(H)$. Therefore, (ii) follows.

   Finally, suppose that $v$ does not belong to any strong metric basis of $H$.
    Since the function $f: \{x\}\times V(H)\rightarrow V(H)$, where $f(x,y)=y$, is a graph isomorphism  and  $B_x\cup\{(x,v)\}$ is a strong metric generator for $H_{x}$,
    the set $$A=f(B_x\cup\{(x,v)\})=\{v\}\cup \{u: (x,u)\in B_x\}$$ is a strong metric generator for $H$. Thus, since  $v$ does not belong to any strong metric basis of $H$, $|A|>dim_s(H)$. Taking into account that $(x,v)\not\in B_x$  we obtain $|B_x|=|B_x\cup \{(x,v)\}|-1=|A|-1\ge dim_s(H)$.
   The proof is complete.
\end{proof}

\begin{theorem}\label{H is not T v in basis}
Let $G$ be a connected graph of order $n\ge 2$ and let $H$  be a connected graph.
\begin{enumerate}[{\rm (i)}]
\item If   $v\in V(H)$ belongs to a strong metric basis of $H$, then
$$n\cdot dim_s(H)-1\le dim_s(G\circ_{v} H) \le (|\partial(H)|-1)(n-1) + dim_s(H)-1.$$

\item If   $v\in V(H)$ does not belong to any strong metric basis of $H$, then
$$n\cdot dim_s(H)\le dim_s(G\circ_{v} H) \le \left\{\begin{array}{ll}
                            |\partial(H)|(n-1) + dim_s(H), & \mbox{if $v\not \in \partial(H)$,} \\
                             & \\
                             (|\partial(H)|-1)(n-1) + dim_s(H), & \mbox{if $v  \in \partial(H)$.}
                             \end{array} \right.$$
\end{enumerate}
\end{theorem}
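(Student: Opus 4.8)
The plan is to apply Theorem~\ref{lem_oellerman}, which identifies $dim_s(G\circ_v H)$ with the vertex cover number of $(G\circ_v H)_{SR}$, and to exploit the structure of that graph. First I would record the shape of $(G\circ_v H)_{SR}$: by Proposition~\ref{boundary-of-rooted} its vertex set is $V(G)\times(\partial(H)\setminus\{v\})$ if $v\in\partial(H)$ and $V(G)\times\partial(H)$ if $v\notin\partial(H)$; by Lemma~\ref{rooted mutually_maximally_distant} the edges lying inside a single copy $H_x$ form a copy of $H_{SR}$ with the vertex $v$ deleted (the deletion being vacuous when $v\notin\partial(H)$); and by Lemma~\ref{maximally_distant with root} the edges joining two distinct copies $H_x$ and $H_{x'}$ are exactly all pairs $(x,y)(x',y')$ with $y,y'\in M(v)$, where $M(v)\subseteq\partial(H)\setminus\{v\}$ by Lemma~\ref{maximally distant with v}. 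In particular the set $V(G)\times M(v)$ carries a complete-multipartite adjacency pattern: $(x,y)$ and $(x',y')$ are adjacent whenever $x\ne x'$.

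For the lower bounds, fix a strong metric basis $B$ of $G\circ_v H$ and write $dim_s(G\circ_v H)=|B|=\sum_{x\in V(G)}|B_x|$. In case (ii), where $v$ lies in no strong metric basis of $H$, Lemma~\ref{Lemmmadivide}(iii) gives $|B_x|\ge dim_s(H)$ for every $x$, and summing over the $n$ copies yields $|B|\ge n\cdot dim_s(H)$. In case (i) the crucial observation is that at most one copy $x^*$ can fail to contain $\{x^*\}\times M(v)$ in $B$: if $(x_1,y_1)\notin B$ and $(x_2,y_2)\notin B$ with $x_1\ne x_2$ and $y_1,y_2\in M(v)$, then $(x_1,y_1)(x_2,y_2)$ is an edge of $(G\circ_v H)_{SR}$ by Lemma~\ref{maximally_distant with root}, contradicting that $B$ is a vertex cover. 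Hence for all but (at most) one copy $x$ we have $B_x\supseteq\{x\}\times M(v)$ and so $|B_x|\ge dim_s(H)$ by Lemma~\ref{Lemmmadivide}(ii), while the exceptional copy satisfies $|B_{x^*}|\ge dim_s(H)-1$ by Lemma~\ref{Lemmmadivide}(i); adding these gives $|B|\ge(n-1)\,dim_s(H)+(dim_s(H)-1)=n\cdot dim_s(H)-1$.

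For the upper bounds I would exhibit an explicit vertex cover $B$ of $(G\circ_v H)_{SR}$. Fix a distinguished vertex $x_1\in V(G)$ and a strong metric basis $C$ of $H$ (so $|C|=dim_s(H)$ and $C\subseteq\partial(H)$), chosen with $v\in C$ in case (i); note that this forces $v\in\partial(H)$ in case (i), while in case (ii) the hypothesis forces $v\notin C$. Let $B$ consist of the \emph{entire} vertex set of every copy $H_x$ with $x\ne x_1$, together with $\{x_1\}\times C$ when $v\notin\partial(H)$ and $\{x_1\}\times(C\setminus\{v\})$ when $v\in\partial(H)$. Every within-copy edge is covered: in $H_{x_1}$ because $C$, respectively $C\setminus\{v\}$, is a vertex cover of $H_{SR}$, respectively of $H_{SR}-v$; and in every other copy because all its vertices were taken. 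Every edge between two distinct copies is covered because one of its endpoints lies in a copy $H_x$ with $x\ne x_1$, all of whose vertices belong to $B$. Counting vertices gives $|B|=(n-1)|\partial(H)|+dim_s(H)$ when $v\notin\partial(H)$ and $|B|=(n-1)(|\partial(H)|-1)+dim_s(H)$ when $v\in\partial(H)$ with $v\notin C$, which settles case (ii); in case (i), $v\in C$ yields $|C\setminus\{v\}|=dim_s(H)-1$ and hence $|B|=(n-1)(|\partial(H)|-1)+dim_s(H)-1$, as required.

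The step I expect to be the main obstacle is the lower bound of case (i): Lemma~\ref{Lemmmadivide}(i) only guarantees $|B_x|\ge dim_s(H)-1$, so one must show that this deficit of a single vertex can occur in at most one copy. This is precisely what the complete-multipartite adjacency on $V(G)\times M(v)$ enforces, but it requires combining that global structure with the local estimate of Lemma~\ref{Lemmmadivide}(ii) in the correct way. The case (ii) lower bound and all three upper-bound counts are then routine bookkeeping once the structure of $(G\circ_v H)_{SR}$ has been pinned down.
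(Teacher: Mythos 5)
Your proof is correct and follows essentially the same route as the paper's: the lower bounds come from splitting a strong metric basis $B$ of $G\circ_v H$ into the slices $B_x$ and applying Lemma \ref{Lemmmadivide} (with the complete multipartite adjacency on $V(G)\times M(v)$ from Lemma \ref{maximally_distant with root} forcing at most one ``deficient'' copy in case (i)), and the upper bounds come from the same vertex cover the paper uses, namely the full boundary of $n-1$ copies together with (the image of) a strong metric basis of $H$, minus $v$, in the remaining copy. The only nitpick is the phrase ``entire vertex set of every copy $H_x$,'' which should read ``all vertices of $\partial(G\circ_v H)$ lying in the copy $H_x$''; your counts show that this is what you intended.
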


\begin{proof}
Let $W$ be a strong metric basis of $H$ such that $v\in W$ and let $B$ be a strong metric basis of $G\circ_{v} H$.  Since $v$ belongs to a metric basis of $H$, we have $v\in \partial(H)$.
Suppose there exists $x\in V(G)$ such that $(x,u)\not \in B_x$  for some $u\in M(v)$.  By Lemma \ref{Lemmmadivide} (i) we obtain  $|B_x|\ge dim_s(H)-1$.
Moreover, by Lemma \ref{maximally_distant with root} we have that for  $x'\in V(G)-\{x\}$ and $u'\in M(v)$ the vertices $(x,u)$ and $(x',u')$ are mutually maximally distant in $G\circ_v H$. Hence, since $(x,u)\not \in B_x$ and  $B$ is  a vertex cover of $(G\circ_{v} H)_{SR}$,  for every $x'\in V(G)-\{x\}$ we have  $B_{x'}\supset \{x'\}\times M(v) $.
So, according to Lemma \ref{Lemmmadivide} (ii) we have $|B_{x'}|\ge dim_s(H)$.
Therefore, $$dim_s(G\circ_{v} H)=|B|=|B_x|+\sum_{x'\in V(G)-\{x\}}|B_{x'}|\ge n\cdot dim_s(H)-1.$$
On the other hand, since $v\in \partial(H)$,  Proposition \ref{boundary-of-rooted} (ii) leads to   $\partial \left(G\circ_{v} H\right)=V(G)\times (\partial(H)-\{v\}).$
We will show that $S=\partial \left(G\circ_{v} H\right) -P$ is a vertex cover for $(G\circ_{v} H)_{SR}$, where $P=\{a\}\times (\partial(H)-W\cup\{v\})$ and $a\in V(G)$.
Let $(x,y)$ and $(x',y')$ be two adjacent vertices in  $(G\circ_{v} H)_{SR}$. If $x\ne a$ or $x'\ne a$, then $(x,y)\in S$ or $(x',y')\in S$. Now let,  $x=x'=a$. Since $H_a\cong H$ and $W$ is a vertex cover for $H$, $\{a\}\times W$ is a vertex cover for $H_a$ and, as a consequence, $(x,y)\in \{a\}\times W \subset S$ or $(x',y')\in \{a\}\times W \subset S$. Hence, $S$ is a vertex cover for $(G\circ_{v} H)_{SR}$. Therefore,
$$dim_s(G\circ_{v} H)\le |S|=(|\partial(H)|-1)(n-1) + dim_s(H)-1.$$
The proof of (i) is complete.

From now on we assume that   $v$ does not belong to any strong metric basis of $H$. The lower bound of (ii) is a direct consequence of  Lemma \ref{Lemmmadivide} (iii). Suppose $v\not\in \partial(H)$. In this case, by Proposition \ref{boundary-of-rooted} (i) we conclude  $\partial \left(G\circ_{v} H\right)=V(G)\times \partial(H).$
By analogy with the proof of the upper bound of (i) we show that $S'=\partial \left(G\circ_{v} H\right) -P'$ is a vertex cover for $(G\circ_{v} H)_{SR}$, where $P'=\{a\}\times (\partial(H)-W')$, $a\in V(G)$ and $W'$ is a strong metric basis of $H$. Hence,
$$dim_s(G\circ_{v} H)\le |S'|=|\partial(H)|(n-1) + dim_s(H).$$

Finally, for the case $v\in \partial(H)$ we have $\partial \left(G\circ_{v} H\right)=V(G)\times (\partial(H)-\{v\})$
and proceeding by  analogy with the proof of the upper bound of (i) we show that
$S''=\partial \left(G\circ_{v} H\right) -P''$ is a vertex cover for $(G\circ_{v} H)_{SR}$, where $P''=\{a\}\times (\partial(H)-W'')$, $a\in V(G)$ and  $W''$ is a strong metric basis of $H$. Thus, in this case
$$dim_s(G\circ_{v} H)\le |S''|=(|\partial(H)|-1)(n-1) + dim_s(H).$$
The proof of (ii) is complete.
\end{proof}

As Corollary \ref{corollari-simplicial} shows, the bounds of Theorem \ref{H is not T v in basis} (i) are tight and the upper bound $dim_s(G\circ_{v} H) \le |\partial(H)|(n-1) + dim_s(H)$ of Theorem \ref{H is not T v in basis} (ii) is tight. To show the tightness of the upper bound  $dim_s(G\circ_{v} H) \le (|\partial(H)|-1)(n-1) + dim_s(H)$  we consider the graph $J$ shown in Figure \ref{graph-J}. Notice that any strong metric basis of $J$ is formed by the vertices $y_2$, $y_4$ and three vertices of the set $\{y_1,y_3,y_5,x_6\}$.

\begin{figure}[h]
  \centering
  \includegraphics[width=0.4\textwidth]{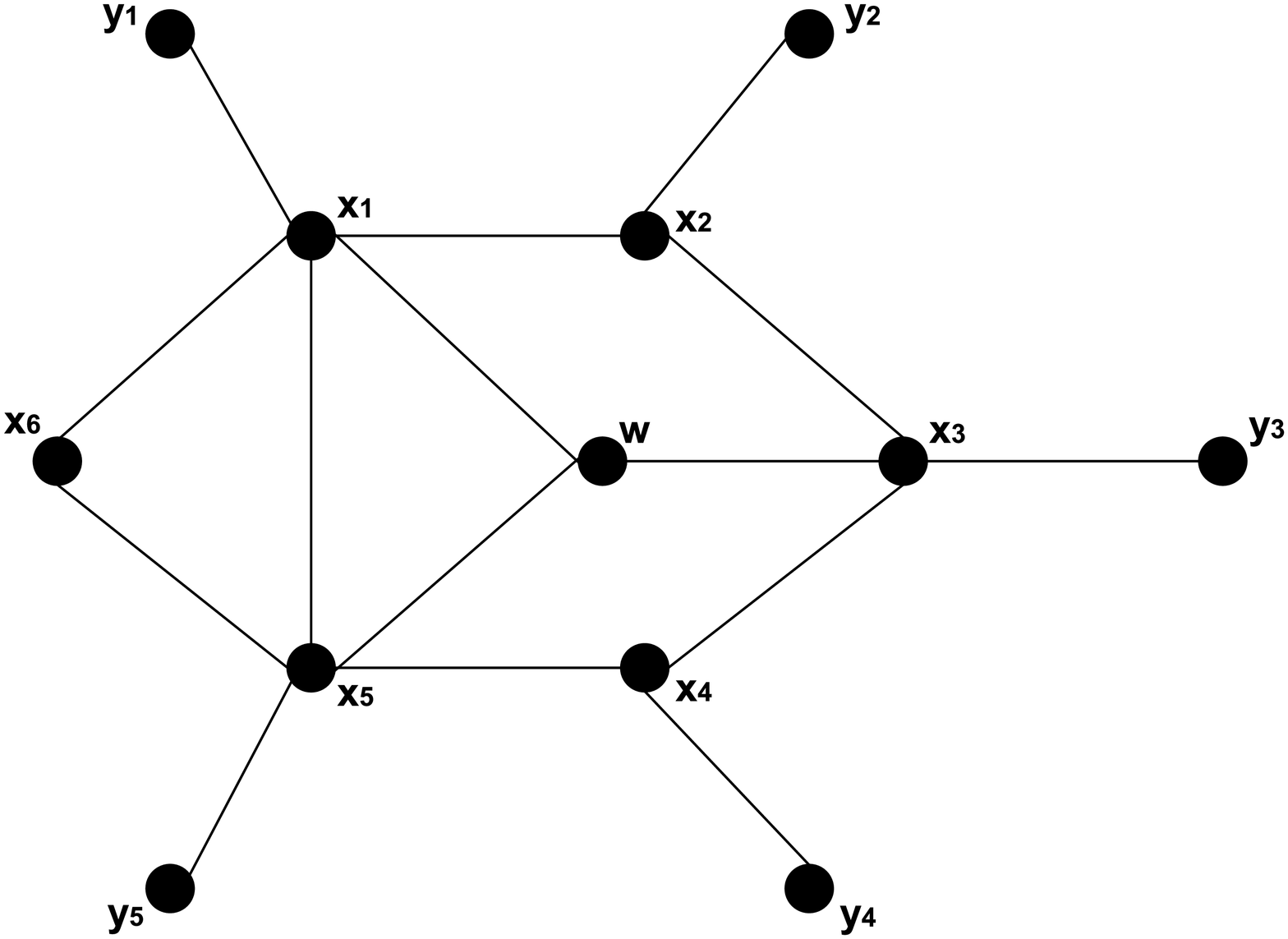}
  \hspace*{2.0cm} \includegraphics[width=0.4\textwidth]{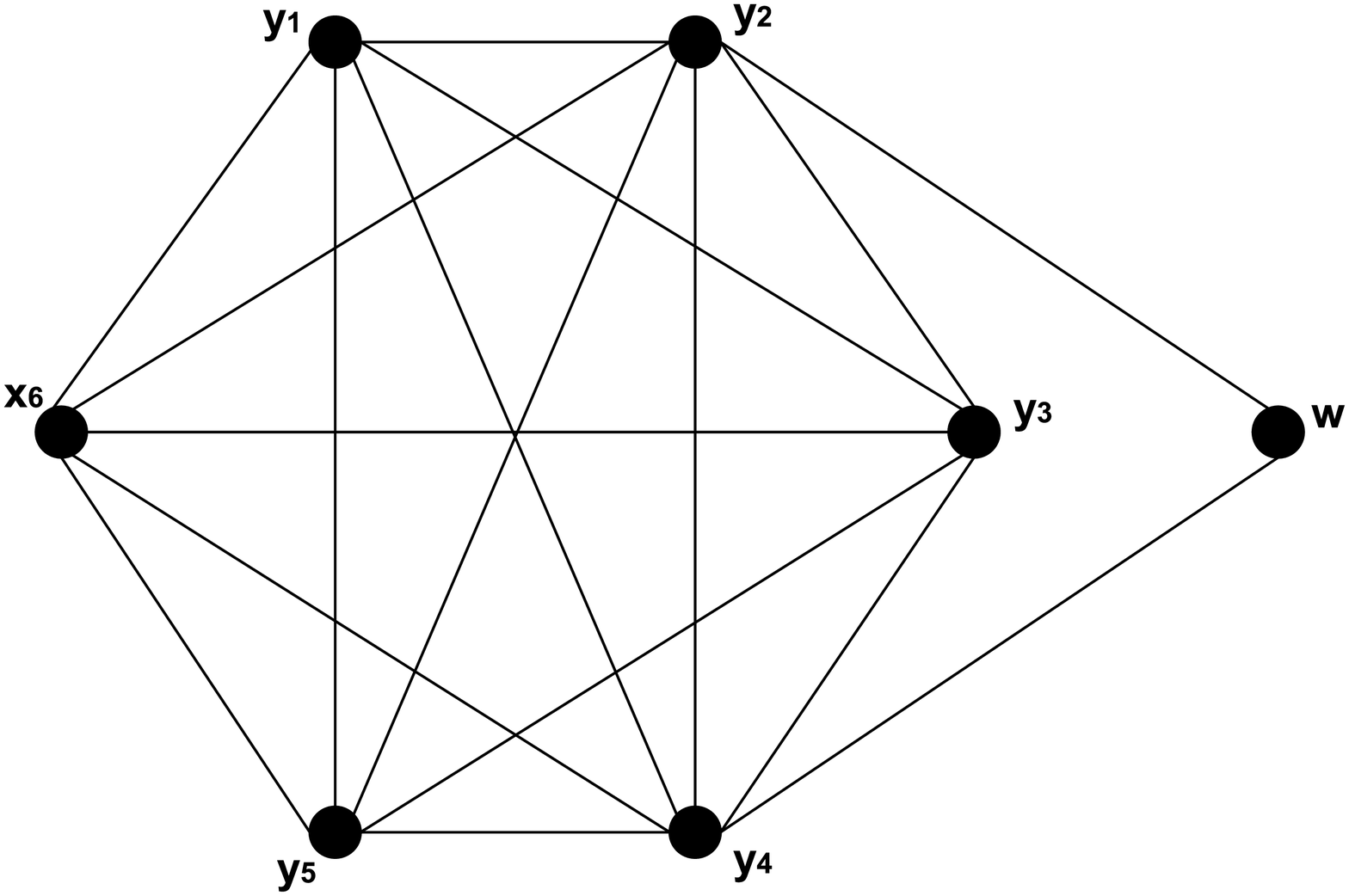}
  \caption{The graph $J$ and its strong resolving graph $J_{SR}$.}\label{graph-J}
\end{figure}

\begin{remark}\label{rem J-omega}
Let $G$ be a connected graph of order $n$. Let $v$ be the vertex of the graph $J$ denoted by $w$. Then $dim_s(G\circ_v J)=(\partial(J)-1)(n-1) +dim_s(J)$.
\end{remark}

\begin{proof}
Let $V=\{u_1,u_2,...,u_n\}$ be the set of vertices of $G$. From Figure \ref{graph-J} we have that there exits six vertices $y_1,y_2,y_3,y_4,y_5$ and $x_6$ which are maximally distant from $v$. So, by using Lemma \ref{maximally_distant with root}, we have that every two vertices $(u_i,y),(u_j,y')\in V\times \{y_1,y_2,y_3,y_4,y_5,x_6\}$, where $i\ne j$, are mutually maximally distant. Moreover, by Lemma \ref{rooted mutually_maximally_distant} for every two mutually maximally distant vertices $z,z'$ in $J$ we have that $(u_i,z),(u_i,z')$ are mutually maximally distant in $G\circ_v J$ for every vertex $u_i$ of $G$. Thus, $(G\circ_v J)_{SR}$ is isomorphic to $K_{6n}$. Therefore, $dim_s(G\circ_v J)=6n-1=(\partial(J)-1)(n-1)+dim_s(J)$.
\end{proof}


To see the tightness of the lower bound of Theorem \ref{H is not T v in basis} (ii) we define the  family  $\mathcal{F}$ of graphs $H$ containing a vertex of degree one not belonging to any strong metric basis of $H$. We begin with the cycle $C_t$, where $t$ is an odd number such that $t\ge 5$, with set of vertices $X=\{x_1,x_2,...,x_t\}$. To obtain a graph $H_{t,p,r}\in \mathcal{F}$ we add the sets of vertices $Y=\{y\}$, $W=\{w_1,w_2,...,w_p\}$ and $Z=\{z_1,z_2,...,z_r\}$, where $p,r\ge 1$, and edges $yx_t$, $x_1x_{t-1}$, $x_{\left \lfloor \frac{t}{2}\right \rfloor}w_i$, for every $i\in \{1,2,...,p\}$, and $x_{\left \lceil \frac{t}{2}\right \rceil}z_j$, for every $j\in \{1,2,...,r\}$. Notice that vertices of $Y\cup W\cup Z$ have degree one in $H_{t,p,r}$ and they are mutually maximally distant between them. Also, for any vertex $a\in N_{H_{t,p,r}}(x_1)$, $d_{H_{t,p,r}}(a,z_j)\le d_{H_{t,p,r}}(x_1,z_j)$, where $j\in \{1,2,...,r\}$. Similarly, for any vertex $b\in N_{H_{t,p,r}}(x_{t-1})$, $d_{H_{t,p,r}}(b,w_i)\le d_{H_{t,p,r}}(x_{t-1},w_i)$, where $i\in \{1,2,...,p\}$. Moreover, we can observe that $x_k$ and $x_{k+\left\lfloor \frac{t}{2}\right\rfloor}$ are mutually maximally distant for every $k\in {2,3,...,\left\lfloor \frac{t}{2}\right\rfloor-1}$. So, $(H_{t,p,r})_{SR}$ is formed by $\left\lfloor \frac{t}{2}\right\rfloor-1$ connected components, that is, $\left\lfloor \frac{t}{2}\right\rfloor-2$ connected components isomorphic to $K_2$ and also, a connected component isomorphic to a graph with set of vertices $Y\cup W\cup Z\cup\{x_1,x_{t-1}\}$ where $\langle Y\cup W\cup Z\rangle$ is isomorphic to $K_{|Y\cup W\cup Z|}$, $x_1$ is adjacent to every vertex $z_j$, $j\in \{1,2,...,r\}$, and $x_{t-1}$ is adjacent to every vertex $w_i$, $i\in \{1,2,...,p\}$. Notice that every $\alpha((H_{t,p,r})_{SR})$-set is formed only by the vertices of $W\cup Z$ and one vertex from each subgraph isomorphic to $K_2$. Therefore, $$dim_s(H_{t,p,r})=\frac{t-5}{2}+p+r$$ and $y$ is a vertex of degree one not belonging to any strong metric basis of $H_{t,p,r}$. The graphs $H_{9,3,4}$ and $(H_{9,3,4})_{SR}$ are shown in Figure \ref{family-F-1}.

\begin{figure}[h]
  \centering
  \includegraphics[width=0.4\textwidth]{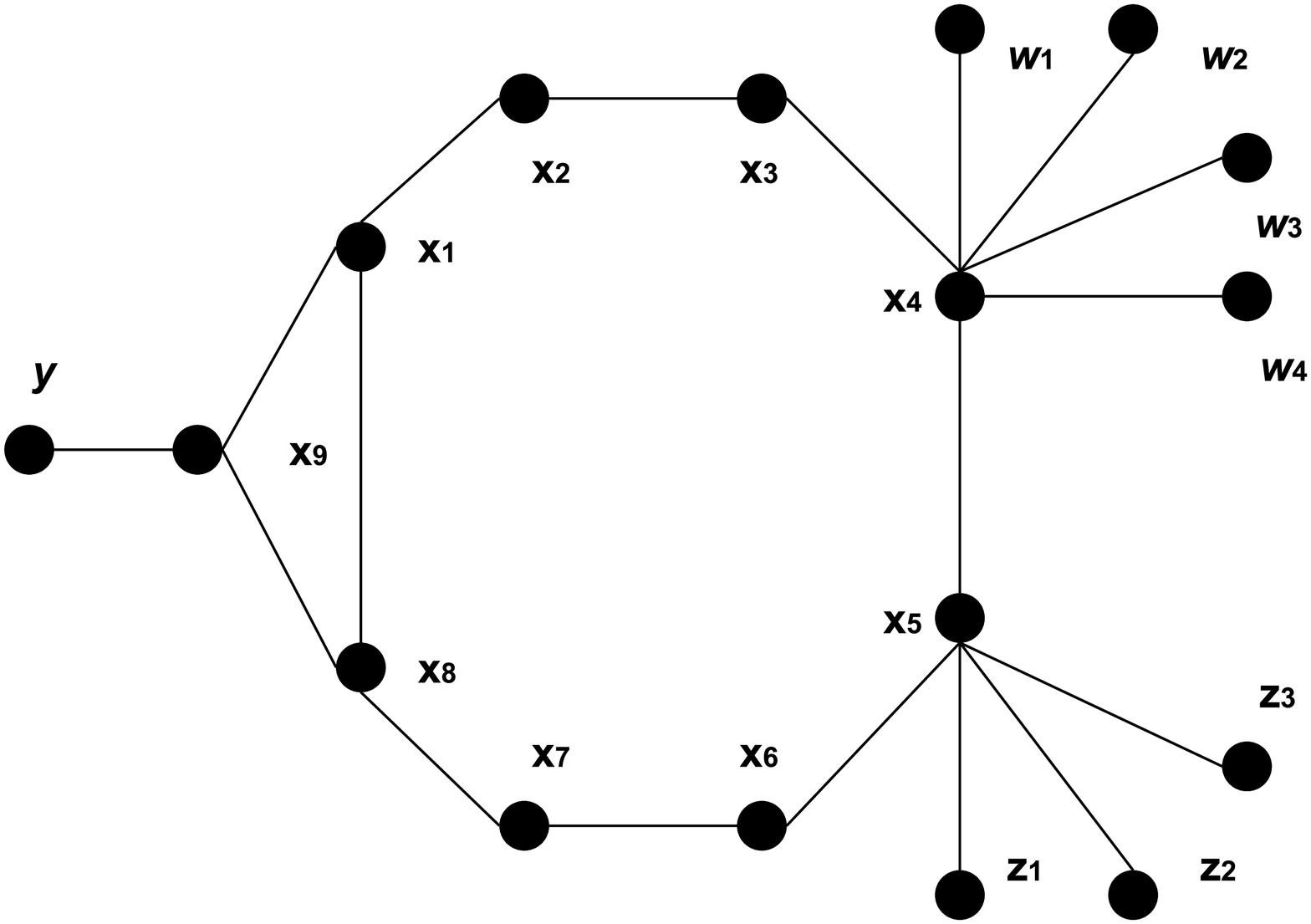}
  \hspace*{2.0cm} \includegraphics[width=0.4\textwidth]{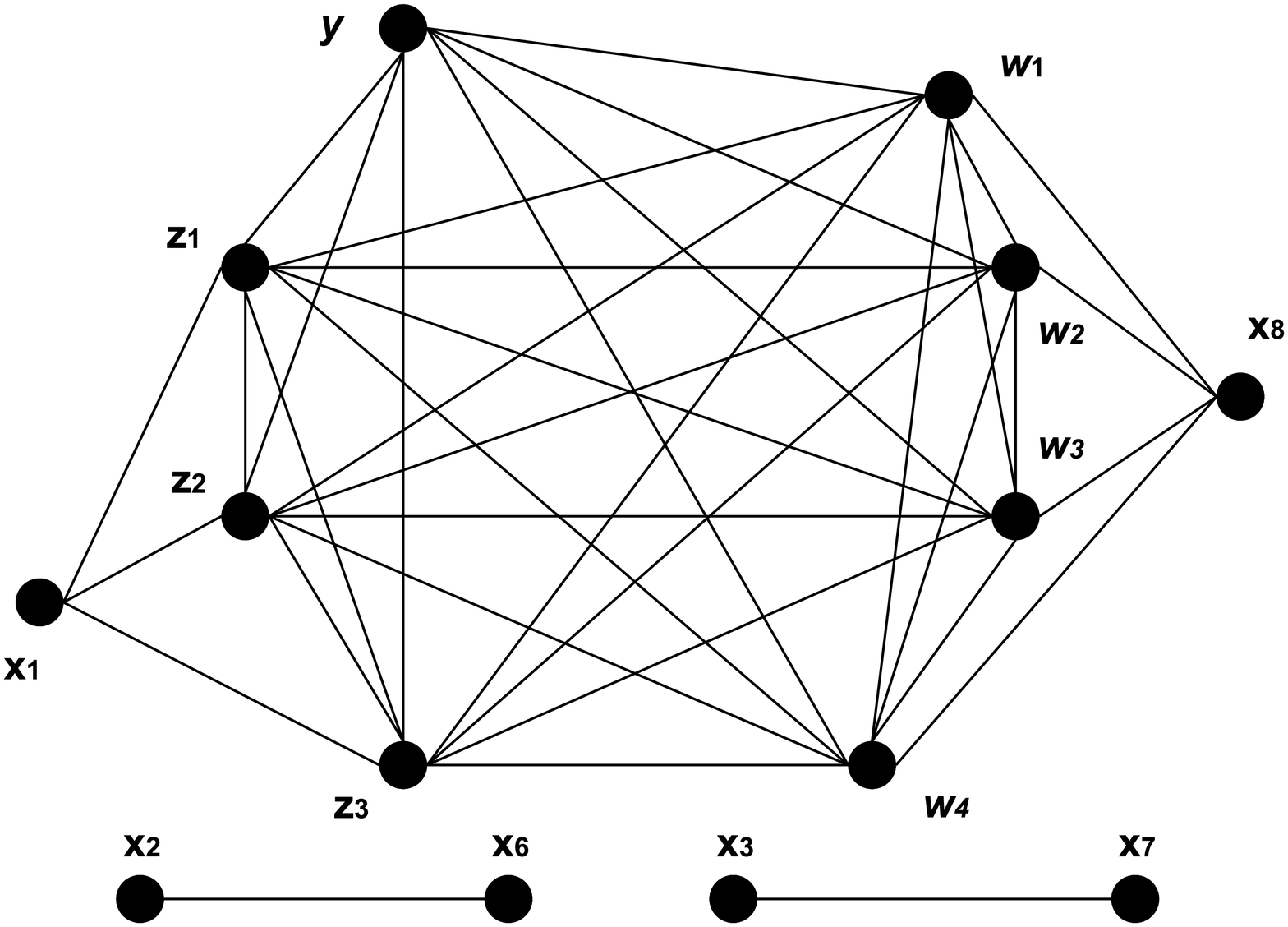}
  \caption{The graphs $H_{9,3,4}$ and $(H_{9,3,4})_{SR}$. The set $S=\{w_1,w_2,w_3,w_4,z_1,z_2,z_3,x_2,x_3\}$ is a strong metric basis of $H_{9,3,4}$.}\label{family-F-1}
\end{figure}

\begin{remark}\label{rem F-1-second}
Let $G$ be a connected graph of order $n$. Let $v$ be the vertex of degree one not belonging to any strong metric basis of the graph $H_{t,p,r}\in \mathcal{F}$. Then
$$dim_s(G\circ_v H_{t,p,r})=n\left(\frac{t-5}{2}+p+r\right)=n\cdot dim_s(H_{t,p,r}).$$
\end{remark}

\begin{proof}
Let $V$ be the vertex set of $G$ and let $H_{t,p,r}\in \mathcal{F}$ with set of vertices $W\cup X\cup Y\cup Z$, where $W=\{w_1,w_2,...,w_p\}$, $X=\{x_1,x_2,...,x_t\}$, $Y=\{y\}$ and $Z=\{z_1,z_2,...,z_r\}$. Since every vertex $u\in W\cup Z$ is maximally distant from $v$, by Lemma \ref{maximally_distant with root}, we have that every two different vertices $(x,y),(x',y')\in V\times (W\cup Z)$, $x\ne x'$, are mutually maximally distant. Moreover, by Lemma \ref{rooted mutually_maximally_distant} for every two mutually maximally distant vertices $v_i,v_j$ in $H_{t,p,r}$ we have that $(u,v_i),(u,v_{j})$ are mutually maximally distant in $G\circ_v H_{t,p,r}$ for every vertex $u$ of $G$. Thus, $(G\circ_v H_{t,p,r})_{SR}$ is formed by $n\frac{t-5}{2}+1$ connected components, i. e., $n\frac{t-5}{2}$ connected components isomorphic to $K_2$ and one connected component isomorphic to a graph $G_1$ with set of vertices $V\times (W\cup Z\cup\{x_1,x_{t-1})\}$ where $\langle V\times (W\cup Z)\rangle$ is isomorphic to $K_{n|W\cup Z|}$ and for every $u\in V$, $(u,x_1)$ is adjacent to every vertex $(u,z_j)$, $j\in \{1,2,...,r\}$, and $(u,x_{t-1})$ is adjacent to every vertex $(u,w_i)$, $i\in \{1,2,...,p\}$. Since in $G_1$ every vertex of $\langle V\times (W\cup Z)\rangle$ has a neighbor not belonging to $V\times (W\cup Z)$ we have that $\alpha(G_1)=n|W\cup Z|$. Therefore, we obtain that
$$dim_s(G\circ_v H_{t,p,r})=\alpha((G\circ_v H_{t,p,r})_{SR})=n|W\cup Z|+n\frac{t-5}{2}=n\left(\frac{t-5}{2}+p+r\right).$$
\end{proof}

According to the Remark \ref{rem F-1-second} we have that for every graph $H\in \mathcal{F}$ and any connected graph $G$ of order $n$, $dim_s(G\circ_{v} H)=n\cdot dim_s(H)$ where $v$ is the vertex of degree one not belonging to any strong metric basis of the graph $H$.

The next result from \cite{strongDimensionCartesian} will be useful to prove Proposition \ref{vertex degree 1}.

\begin{lemma}{\em \cite{strongDimensionCartesian}}\label{lem_simplicial_v}
For every connected graph $G$, $dim_s(G)\ge |\sigma(G)|-1$.
\end{lemma}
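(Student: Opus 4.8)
The plan is to reduce the statement to a claim about the strong resolving graph $G_{SR}$ via Theorem~\ref{lem_oellerman}: since $dim_s(G)=\alpha(G_{SR})$, it suffices to prove that every vertex cover of $G_{SR}$ has at least $|\sigma(G)|-1$ vertices. I would get this by showing that the set $\sigma(G)$ of simplicial vertices induces a clique in $G_{SR}$; recall $\sigma(G)\subseteq\partial(G)=V(G_{SR})$, so it makes sense to look at $\langle\sigma(G)\rangle$ inside $G_{SR}$. A vertex cover of a complete graph $K_m$ has cardinality at least $m-1$, hence $\alpha(G_{SR})\ge|\sigma(G)|-1$ once the clique claim is established. (When $|\sigma(G)|\le 1$ the inequality is trivial since $dim_s(G)\ge 0$.)

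The core step is the elementary observation that if $u$ is a simplicial vertex of $G$ and $w\in V(G)\setminus\{u\}$ is arbitrary, then $u$ is maximally distant from $w$. To see this, fix a shortest $w$--$u$ path and let $x$ be the penultimate vertex on it, so $x\in N_G(u)$ and $d_G(w,x)=d_G(w,u)-1$. For any $u'\in N_G(u)$ with $u'\ne u$: either $u'=x$, in which case $d_G(w,u')=d_G(w,u)-1<d_G(w,u)$; or $u'\ne x$, and since $N_G(u)$ induces a complete graph we have $x\sim u'$, whence $d_G(w,u')\le d_G(w,x)+1=d_G(w,u)$. In all cases $d_G(w,u')\le d_G(w,u)$, which is exactly the condition that $u$ be maximally distant from $w$.

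Applying this observation to two distinct simplicial vertices $u$ and $u'$ (taking $w=u'$, then $w=u$) shows that $u$ is maximally distant from $u'$ and $u'$ is maximally distant from $u$; thus $u$ and $u'$ are mutually maximally distant, i.e.\ adjacent in $G_{SR}$. Hence the subgraph of $G_{SR}$ induced by $\sigma(G)$ is isomorphic to $K_{|\sigma(G)|}$, and the reduction in the first paragraph completes the proof. I do not expect any genuine obstacle: the only point needing care is checking the ``maximally distant'' claim uniformly over all neighbors of $u$, including the degenerate case in which the candidate neighbor already lies on the chosen shortest path, which the case split above handles.
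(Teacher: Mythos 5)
Your proof is correct. Note that the paper itself gives no proof of this lemma: it is imported verbatim from the reference \cite{strongDimensionCartesian} (``On the strong metric dimension of Cartesian and direct products of graphs''), so there is no in-paper argument to compare against. Your route is the natural one and is exactly the argument implicit in the paper's own remarks: you show that a simplicial vertex $u$ is maximally distant from every other vertex $w$ (the case split on whether a neighbor $u'$ of $u$ equals the penultimate vertex $x$ of a shortest $w$--$u$ path is handled correctly, using that $N_G(u)$ induces a clique to get $x\sim u'$), hence any two distinct simplicial vertices are mutually maximally distant, so $\langle\sigma(G)\rangle$ is a clique in $G_{SR}$; since a vertex cover of a graph containing a clique on $m$ vertices must contain at least $m-1$ of them, Theorem~\ref{lem_oellerman} gives $dim_s(G)=\alpha(G_{SR})\ge|\sigma(G)|-1$. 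This is consistent with the paper's unproved assertions that $\sigma(G)\subseteq\partial(G)$ and that $G_{SR}\cong K_{|\partial(G)|}$ whenever $\partial(G)=\sigma(G)$, and your handling of the trivial case $|\sigma(G)|\le 1$ closes the only loose end. I see no gap.
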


\begin{proposition}\label{vertex degree 1}
Let $G$ be a connected graph of order $n\ge 2$ and let $v$ be a vertex of a graph $H$. If $v$ does not belong to the boundary of $H$ and there exists a vertex different from $v$, of degree one in $H$, not belonging to any strong metric basis of $H$, then $$dim_s(G\circ_{v} H)\ge n(dim_s(H)+1)-1.$$
\end{proposition}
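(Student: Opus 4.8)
The goal is to bound $dim_s(G\circ_v H)$ from below by $n(dim_s(H)+1)-1$ under the hypothesis that $v\notin\partial(H)$ and that $H$ has a degree-one vertex $z\ne v$ not belonging to any strong metric basis of $H$. The plan is to fix a strong metric basis $B$ of $G\circ_v H$, write $B=\bigcup_{x\in V(G)}B_x$ as in Lemma~\ref{Lemmmadivide}, and show that $|B_x|\ge dim_s(H)+1$ for all but at most one $x\in V(G)$, while $|B_x|\ge dim_s(H)$ for the remaining one; summing over the $n$ copies then yields $|B|\ge n(dim_s(H)+1)-1$. The starting point is Lemma~\ref{Lemmmadivide}(iii): since $v$ lies in no strong metric basis of $H$, each $B_x$ already satisfies $|B_x|\ge dim_s(H)$, so the task reduces to improving this by one for at least $n-1$ of the copies.

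\medskip

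First I would analyze the structure of $z$ and its unique neighbour. Let $z\in V(H)$ be the prescribed vertex of degree one with neighbour $z^*$. Since $z$ is a leaf, $z$ is simplicial, hence $z\in\sigma(H)\subseteq\partial(H)$, and $z$ is maximally distant from every vertex of $H$; in particular $z\in M(v)$ (note $z\ne v$ since $v\notin\partial(H)$). By Lemma~\ref{maximally_distant with root}, for distinct $x,x'\in V(G)$ the vertices $(x,z)$ and $(x',z)$ are mutually maximally distant in $G\circ_v H$, so the set $\{(x,z):x\in V(G)\}$ induces a clique $K_n$ in $(G\circ_v H)_{SR}$. Consequently the vertex cover $B$ must contain $(x,z)$ for at least $n-1$ values of $x$; say $(x,z)\in B$ for all $x\in V(G)\setminus\{x_0\}$ (choosing $x_0$ arbitrarily if all are in $B$). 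The next step is to argue that for each such $x\ne x_0$, the copy $B_x$ not only covers $(H_x)_{SR}$ up to the vertex $(x,v)$ but in fact does more. The key observation is that since $z$ lies in no strong metric basis of $H$ and $z$ is a leaf, the set $\{z\}\cup\{u:(x,u)\in B_x\}$ is a strong metric generator of $H$ containing $z$, hence has size $>dim_s(H)$; but this only gives $|B_x|\ge dim_s(H)$ again, so one must squeeze harder.

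\medskip

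The decisive refinement is this. For $x\ne x_0$ we know $(x,z)\in B_x$. I claim $B_x\setminus\{(x,z)\}$ is still essentially a strong metric generator for $H_x$ after adjoining $(x,v)$: indeed in $(H_x)_{SR}$ the only edge incident with the leaf-vertex $(x,z)$ is the edge to $(x,z^*)$ (since $z$ is simplicial, $z$ is mutually maximally distant in $H$ only with vertices adjacent to nothing further — one must verify using Lemma~\ref{rooted mutually_maximally_distant} that in $(H_x)_{SR}$, $(x,z)$ has neighbours exactly those $(x,w)$ with $w$ mutually maximally distant from $z$ in $H$, and for a leaf these are governed by $z^*$). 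Hence if $(x,z^*)\in B_x$ as well, then $(x,z)$ is redundant and $B_x'=(B_x\setminus\{(x,z)\})\cup\{(x,v)\}$ maps under the isomorphism $f$ to a strong metric generator of $H$ not containing $z$, which forces $|B_x|-1\ge dim_s(H)$ only if that generator is non-minimal — and here is where I use that $z$ is in \emph{no} strong metric basis: any strong metric generator of $H$ of size $dim_s(H)$ avoids $z$, so avoiding $z$ gives no savings and we cannot conclude. The correct argument instead keeps $(x,z)$ and applies Lemma~\ref{Lemmmadivide}(iii) to $B_x$ while noting that the edge $\{(x,z),(x,z^*)\}$ of $(H_x)_{SR}$ is already covered by $(x,z)$, so $B_x\setminus\{(x,z)\}$ together with $(x,v)$ covers all of $(H_x)_{SR}$ except possibly edges at $(x,z^*)$; combined with a twin/simplicial count via Lemma~\ref{lem_simplicial_v} applied inside $H$ one gets that $B_x$ restricted away from $z$ still has size $\ge dim_s(H)$, whence $|B_x|\ge dim_s(H)+1$.

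\medskip

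\textbf{Main obstacle.} The delicate point — and the step I expect to be the crux — is precisely establishing $|B_x|\ge dim_s(H)+1$ for the $n-1$ copies with $(x,z)\in B_x$, i.e.\ converting ``$B_x$ contains the forced clique-vertex $(x,z)$'' plus ``$B_x\cup\{(x,v)\}$ is a strong resolving set of $H$ avoiding no vertex we care about'' into a genuine $+1$ over $dim_s(H)$. The natural route is: $f(B_x\cup\{(x,v)\})=\{v\}\cup\{u:(x,u)\in B_x\}$ is a strong metric generator of $H$ of size $|B_x|+1$ that contains both $v$ and $z$; since $v$ lies in no strong metric basis, this set is not a basis, so $|B_x|+1\ge dim_s(H)+1$, giving $|B_x|\ge dim_s(H)$ — the bare Lemma~\ref{Lemmmadivide}(iii) bound — and the extra $+1$ must come from a separate combinatorial fact, e.g.\ that no $dim_s(H)$-set can simultaneously avoid $z$ (automatic) \emph{and} serve as $\{u:(x,u)\in B_x\}$ when the latter is forced to contain $z$. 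Handling this cleanly — likely by exhibiting, for each copy $x\ne x_0$, two disjoint obstructions (the leaf-edge at $z$ and an independent near-basis obstruction) that $B_x$ must cover separately — is the technical heart, after which the summation $|B|=|B_{x_0}|+\sum_{x\ne x_0}|B_x|\ge dim_s(H)+(n-1)(dim_s(H)+1)=n(dim_s(H)+1)-1$ is immediate.
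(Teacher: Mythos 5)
Your skeleton is the right one (and essentially the paper's): the $n$ copies $(x,z)$ of the leaf $z$ lie in $M(v)$, hence form a clique in $(G\circ_v H)_{SR}$ by Lemma~\ref{maximally_distant with root}, so any vertex cover $B$ contains at least $n-1$ of them; one then wants $|B_x|\ge dim_s(H)+1$ for each such copy and $|B_x|\ge dim_s(H)$ for the remaining one. But you never actually prove the step you yourself call the crux, and the route you sketch for it does not work. The difficulty is that you keep reasoning through Lemma~\ref{Lemmmadivide}(iii), i.e.\ through the set $f(B_x\cup\{(x,v)\})$, which only ever yields $|B_x|+1>dim_s(H)$ and hence the bare bound $|B_x|\ge dim_s(H)$. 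Your attempted repair via the claim that in $(H_x)_{SR}$ the only edge at $(x,z)$ goes to $(x,z^*)$ is false: a leaf $z$ is mutually maximally distant from every other leaf of $H$ (and typically from more vertices), so $(x,z)$ is not a degree-one vertex of $(H_x)_{SR}$. You then abandon that line and end by saying the extra $+1$ ``must come from a separate combinatorial fact'' to be ``handled cleanly'' --- which is an admission that the key inequality is unproved.

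The missing observation is short. Because $v\notin\partial(H)$, no edge of $H_{SR}$ is incident with $v$, so by Lemma~\ref{rooted mutually_maximally_distant} every edge of $H_{SR}$ lifts to an edge of $(G\circ_v H)_{SR}$ inside the copy $H_x$; hence the projection $f(B_x)=\{u:(x,u)\in B_x\}$ is already a vertex cover of $H_{SR}$ --- there is no need to adjoin $v$ --- and by Theorem~\ref{lem_oellerman} it is a strong metric generator for $H$ of cardinality $|B_x|$. If $(x,z)\in B_x$, this generator contains $z$; since $z$ belongs to no strong metric basis of $H$, it is not a basis, so $|B_x|\ge dim_s(H)+1$. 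Summing, $|B|\ge (n-1)(dim_s(H)+1)+dim_s(H)=n(dim_s(H)+1)-1$. (The paper phrases the same count as $n\,\alpha(H_{SR})+n-1$, forcing $n-1$ simplicial leaf-copies into the cover on top of the $\alpha(H_{SR})$ vertices needed per copy; either formulation works, but the hypothesis that must be invoked at the crux is the one about the leaf $z$, not the one about $v$.)
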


\begin{proof}
Let $w$ be a vertex of degree one in $H$ not belonging to any strong metric basis of $H$. Notice that the vertices of the set $A=\{(u_i,w)\;:\;i\in\{1,2,...,n\}\}$ are also vertices of degree one in $G\circ_v H$. Thus, they are simplicial vertices and from Lemma \ref{lem_simplicial_v} we have that at least all but one vertices of $A$ belongs to every strong metric basis of $G\circ_v H$. Thus,
$$dim_s(G\circ_v H)=\alpha((G\circ_v H)_{SR})\ge n\,\alpha(\langle\partial(H)\rangle)+|A|-1=n\,\alpha(H_{SR})+n-1 =n(dim_s(H)+1)-1.$$
\end{proof}

As the following remark shows, the above bound is tight.

\begin{remark}\label{rem F-1}
Let $G$ be a connected graph of order $n$. Let $v$ be the vertex of the graph $H_{t,p,r}\in \mathcal{F}$ adjacent to the vertex of degree one not belonging to any strong metric basis of $H_{t,p,r}$. Then
$$dim_s(G\circ_v H_{t,p,r})=n\left(\frac{t-5}{2}+p+r+1\right)-1=n (dim_s(H_{t,p,r})+1)-1.$$
\end{remark}

\begin{proof}
Let $V$ be the vertex set of $G$. Now, according to the construction of the family $\mathcal{F}$, let the graph $H_{t,p,r}$ with set of vertices $W\cup X\cup Y\cup Z$, where $W=\{w_1,w_2,...,w_p\}$, $X=\{x_1,x_2,...,x_t\}$, $Y=\{y\}$ and $Z=\{z_1,z_2,...,z_r\}$. Since every vertex $y\in W\cup Y\cup Z$ is maximally distant from $v$, by Lemma \ref{maximally_distant with root}, we have that every two different vertices $(x,y),(x',y')\in V\times (W\cup Y\cup Z)$, $x\ne x'$, are mutually maximally distant. Moreover, by Lemma \ref{rooted mutually_maximally_distant} for every two mutually maximally distant vertices $v_i,v_j$ in $H_{t,p,r}$ we have that $(u,v_i),(u,v_{j})$ are mutually maximally distant in $G\circ_v H_{t,p,r}$ for every vertex $u$ of $G$. Thus, $(G\circ_v H_{t,p,r})_{SR}$ is formed by $n\frac{t-5}{2}+1$ connected components, that is, $n\frac{t-5}{2}$ connected components isomorphic to $K_2$ and one connected component isomorphic to a graph $G_1$ with set of vertices $V\times (W\cup Y\cup Z\cup\{x_1,x_{t-1})\}$ where $\langle V\times (W\cup Y\cup Z)\rangle$ is isomorphic to $K_{n|W\cup Y\cup Z|}$ and for every $u\in V$, $(u,x_1)$ is adjacent to every vertex $(u,z_j)$, $j\in \{1,2,...,r\}$, and $(u,x_{t-1})$ is adjacent to every vertex $(u,w_i)$, $i\in \{1,2,...,p\}$. Notice that $\alpha(G_1)=n|W\cup Y\cup Z|-1$. Therefore, we obtain that
$$dim_s(G\circ_v H_{t,p,r})=\alpha((G\circ_v H_{t,p,r})_{SR})=n|W\cup Y\cup Z|-1+n\frac{t-5}{2}=n\left(\frac{t-5}{2}+p+r+1\right)-1.$$
\end{proof}

\end{document}